\theoremstyle{plain}
\newtheorem{thm}{Theorem}[section]
\newtheorem{lem}[thm]{Lemma}
\newtheorem{prop}[thm]{Proposition}
\newtheorem{cor}[thm]{Corollary}
\theoremstyle{definition}
\newtheorem{eg}[thm]{Example}
\theoremstyle{remark}
\newtheorem{rmk}[thm]{Remark}
\def\Q{{\mathbf Q}}
\def\R{{\mathbf R}}
\def\C{{\mathbf C}}
\def\A{{\mathbf A}}
\def\P{{\mathbf P}}
\def\cD{\mathcal{D}}
\def\cE{\mathcal{E}}
\def\cF{\mathcal{F}}
\def\cH{\mathcal{H}}
\def\cM{\mathcal{M}}
\def\cO{\mathcal{O}}
\def\.{\cdot}
\def\^{\widehat}
\def\({\left(}
\def\){\right)}
\renewcommand{\and}{ \ \ \text{ and } \ \ }
\begin{document}

\author{Mircea Musta\c{t}\u{a}}

\address{Department of Mathematics, University of Michigan, 530 Church Street, Ann Arbor, MI 48109, USA}

\email{mmustata@umich.edu}

\author{Sebasti\'{a}n Olano}

\address{Department of Mathematics, University of Michigan, 530 Church Street, Ann Arbor, MI 48109, USA}

\email{olano@umich.edu}

\author{Mihnea Popa}

\address{Department of Mathematics, Harvard University,
1 Oxford Street, 
Cambridge, MA 02138, USA}

\email{mpopa@math.harvard.edu}

\author{Jakub Witaszek}

\address{Department of Mathematics, University of Michigan, 530 Church Street, Ann Arbor, MI 48109, USA}

\email{jakubw@umich.edu}

\thanks{M.M. was partially supported by NSF grants DMS-2001132 and DMS-1952399, M.P. by NSF grant
DMS-2040378, and J.W. by NSF grant DMS-2101897}

\subjclass[2010]{14F10, 14F17, 14B05, 32S35}

\begin{abstract}
We study the Du Bois complex $\underline{\Omega}_Z^\bullet$ of a hypersurface $Z$ in a smooth complex algebraic variety in terms 
its minimal exponent $\widetilde{\alpha}(Z)$. The latter is an invariant of singularities, defined as the negative of the greatest root of the reduced Bernstein-Sato polynomial of $Z$, and refining the log canonical threshold. We show that if $\widetilde{\alpha}(Z)\geq p+1$, then the canonical morphism $\Omega_Z^p\to \underline{\Omega}_Z^p$ is an isomorphism, where 
$\underline{\Omega}_Z^p$ is the $p$-th associated graded piece of the Du Bois complex with respect to the Hodge filtration. On the other hand, if $Z$ is singular and $\widetilde{\alpha}(Z)>p\geq 2$, we obtain non-vanishing results for some of the higher cohomologies of $\underline{\Omega}_Z^{n-p}$.
\end{abstract}

\title{The Du Bois complex of a hypersurface and the minimal exponent}

\maketitle

\section{Introduction}

One of the Hodge theoretic objects of great interest associated to a  variety $Z$ -- by which in this paper we always mean a reduced separated scheme of finite type over $\C$ -- is the Du Bois complex (or filtered 
de Rham complex) $\underline{\Omega}_Z^\bullet$, defined in \cite{DuBois}, and later in a slightly different fashion in 
\cite{GNPP}. This is an object in the derived category of filtered complexes on $Z$; when $Z$ is smooth, it is given by the usual algebraic de Rham complex of $Z$, with its ``stupid" filtration. In general,
the (shifted) associated graded objects
$$\underline{\Omega}_Z^p : = {\rm Gr}^p_F \underline{\Omega}_Z^\bullet [p]$$
are objects in the derived category of coherent sheaves which provide useful generalizations of the bundles of $p$-forms
in the smooth case (for example, they feature in an extension of the Akizuki-Nakano vanishing theorem to singular varieties).
The $0$-th filtered piece $\underline{\Omega}_Z^0$ appears extensively in the literature, as it is related to what
has become a quite important class of singularities; recall that $Z$ is said to have Du Bois singularities if the natural morphism 
$\cO_Z \to  \underline{\Omega}_Z^0$ is a quasi-isomorphism. See for instance \cite{KS1} for a nice 
overview of Du Bois singularities and their role in birational geometry. Besides some formal statements and some special classes of singularities, little is known about $\underline{\Omega}_Z^p$ with $p \ge 1$.

The aim of this paper is to study the behavior of these higher filtered graded pieces of $\underline{\Omega}_Z^\bullet$ when $Z$ is a reduced hypersurface in a smooth irreducible algebraic variety $X$, using methods from the theory of Hodge modules. We give both vanishing and non-vanishing statements about various cohomologies of these complexes, in terms of a singularity invariant derived from the Bernstein-Sato polynomial $b_Z(s)$, namely the \emph{minimal exponent}  $\widetilde{\alpha}(Z)$. This is defined as the negative of the greatest root of the reduced Bernstein-Sato polynomial  $\tilde{b}_Z(s) =  b_Z(s)/ (s+1)$, and has been studied extensively in \cite{Saito_microlocal}, \cite{Saito-MLCT}, \cite{MP1}, \cite{MP2}; see \cite[Section~6]{MP2} for a general discussion.

\smallskip

\noindent
{\bf K\"ahler differentials and the Du Bois complex.} 
From now on we assume that $Z$ is a reduced hypersurface in the smooth, irreducible, $n$-dimensional complex algebraic variety $X$. 

M. Saito has shown that $Z$ has Du Bois singularities if and only if
$\widetilde{\alpha} (Z) \ge 1$, which is equivalent to the pair $(X, Z)$ being log-canonical (he also showed that $Z$ has rational singularities if and only if
$\widetilde{\alpha} (Z) > 1$).
Our main result 
says that a part of the Du Bois complex of $Z$ becomes similarly well-behaved as the minimal exponent gets larger.


\begin{thm}\label{thm1_intro}
If $p$ is an integer such that $0\leq p \leq \widetilde{\alpha}(Z)-1$, 
then the canonical morphism 
$$\Omega_Z^p \to \underline{\Omega}_Z^p$$
is a quasi-isomorphism.\footnote{As part of the proof we show that 
$\Omega_Z^p$ is reflexive for all such $p$, see Remark~\ref{rmk_reflexive}.}
\end{thm}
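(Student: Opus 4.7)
The $p = 0$ case reduces to Saito's criterion that $\widetilde\alpha(Z) \geq 1$ iff $Z$ is Du Bois, so I assume $p \geq 1$; then $\widetilde\alpha(Z) \geq p+1 \geq 2$ also guarantees that $Z$ has rational singularities, hence is normal and Cohen-Macaulay. The plan is to translate the assertion into one about graded pieces of the Hodge filtration on the filtered $D_X$-module $\cO_X(*Z)$, and then invoke the characterization of the minimal exponent through this filtration.

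\emph{Step 1 (Hodge-module setup).} I apply the filtered de Rham functor to the short exact sequence of mixed Hodge modules
\[
0 \to \cO_X \to \cO_X(*Z) \to \cH^1_Z(\cO_X) \to 0,
\]
and extract the relevant graded piece with respect to $F$. The outer terms can be identified with (suitable shifts of) $\Omega_X^p$, coming from $\mathrm{Gr}^F DR(\cO_X)$, and $\underline\Omega_Z^p$, via the standard identification between the graded pieces of $DR(\cH^1_Z(\cO_X))$ and the Du Bois complex of $Z$. The resulting distinguished triangle reduces the theorem to an explicit computation of the middle term, namely a graded piece of $DR(\cO_X(*Z))$.

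\emph{Step 2 (pole order equals Hodge).} The critical input is the Saito--Musta\c{t}\u{a}--Popa characterization of the minimal exponent: the hypothesis $\widetilde\alpha(Z) \geq p+1$ is equivalent to the equality
\[
F_k\, \cO_X(*Z) \;=\; \cO_X\bigl((k+1)Z\bigr) \quad \text{for every } 0 \leq k \leq p.
\]
In this range the graded pieces $\mathrm{Gr}^F_k \cO_X(*Z)$ and the differentials of $\mathrm{Gr}^F_p DR(\cO_X(*Z))$ become fully explicit: in local coordinates where $Z = \{f = 0\}$, they reduce to the Koszul-type operators given by wedging with $df$ and multiplying by $f$.

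\emph{Step 3 (conclusion and main obstacle).} A direct analysis of this Koszul-type complex should show that its only nonvanishing cohomology in the cohomological range dictated by the hypothesis is the cokernel of $\wedge df : \Omega_X^p|_Z \to \Omega_X^{p+1}|_Z$, which, up to the natural twist by $\cO_Z(Z)$, is $\Omega_Z^p$. Plugging back into the triangle of Step 1 and checking that the connecting morphism agrees with the canonical map $\Omega_Z^p \to \underline\Omega_Z^p$ would then complete the proof; the reflexivity statement in the footnote should follow from depth estimates on $\underline\Omega_Z^p$, available here since $Z$ is normal Cohen-Macaulay. The main obstacle I anticipate is precisely this last bookkeeping: identifying the natural sheaf-level map rather than merely an abstract derived isomorphism, and verifying the vanishing of the higher cohomologies of the Koszul-type complex. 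Both rely essentially on having $F_k = \cO_X((k+1)Z)$ for \emph{every} $k \leq p$, not just $k = p$, which is why the hypothesis must control the Hodge filtration throughout the entire range up to $p$.
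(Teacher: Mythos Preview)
Your Step~1 contains a genuine error that derails the rest of the argument. There is no ``standard identification'' between ${\rm Gr}^F_{\bullet}{\rm DR}_X(\cH^1_Z(\cO_X))$ and $\underline{\Omega}_Z^{\bullet}$; the actual relationship (Lemma~\ref{eq_DuBois} in the paper) involves Grothendieck duality:
\[
\underline{\Omega}_Z^p\;\simeq\;\R\cH om_{\cO_X}\bigl({\rm Gr}^F_{p-n}{\rm DR}_X(\cH^1_Z(\cO_X)),\,\omega_X\bigr)[p+1].
\]
Equivalently, $\underline{\Omega}_Z^p$ is a graded de Rham piece of the \emph{dual} Hodge module ${\mathbf D}(\cH^1_Z(\cO_X))$, not of $\cH^1_Z(\cO_X)$ itself. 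Consequently, after you impose $F_k=P_k$ for $k\leq p$, the complex you actually obtain from your triangle is the paper's $C_p^{\bullet}$, which lives in $\Omega_X^{n-p}|_Z,\ldots,\omega_X|_Z$, not in $\Omega_X^p|_Z,\Omega_X^{p+1}|_Z$ as you write in Step~3. Its cohomology is not $\Omega_Z^p$ on the nose; one must compute $\R\cH om_{\cO_X}(C_p^{\bullet},\omega_X)$, and this is precisely what the paper does via the inductive Ext calculation in Lemma~\ref{vanishing_for_C}.

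There is a second missing ingredient even after this is fixed. The paper does \emph{not} identify the canonical map $\Omega_Z^p\to\cH^0(\underline{\Omega}_Z^p)$ directly; instead it proves that both sheaves satisfy Serre's condition $S_2$ (in fact $S_{p+1}$) and then uses normality of $Z$ to conclude. The key input for these depth estimates is the bound ${\rm codim}_Z(Z_{\rm sing})\geq 2p+1$ of Lemma~\ref{lem_singular_locus}, which guarantees that the Koszul complex on $df$ restricted to $Z$ is exact in the relevant range. Your proposal alludes vaguely to ``depth estimates'' but does not mention this codimension bound, and without it neither the exactness of your Koszul-type complex nor the reflexivity of $\Omega_Z^p$ would follow.
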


For any non-negative integer $p$, the singularities for which $\widetilde{\alpha} (Z) \ge p+1$ are sometimes called \emph{$p$-log canonical}, by analogy with the case $p = 0$. Note that the minimal exponent can be explicitly bounded, and can also be computed for certain singularity types. For example, we have 
$\widetilde{\alpha}(Z) = ({\rm dim} ~Z + 1)/m$ for an ordinary singularity of multiplicity $m\geq 2$, and $\widetilde{\alpha}(Z) = \sum w_i$ for a weighted homogeneous isolated singularity of weights $w_1, \ldots, w_n$; see Section \ref{scn:LHFME} for details.

Theorem \ref{thm1_intro} is in fact a special case of a stronger statement, in which the vanishing of each individual 
$\cH^i(\underline{\Omega}_Z^q)$ with $i>0$ is derived from a suitable lower bound on the codimension 
of the locus in $Z$ where the minimal exponent is $<(p+1)$ (i.e.\ the co-support of the Hodge ideal $I_p (Z)$); see Theorem \ref{stronger-vanishing} for the precise statement. One consequence, see Corollary \ref{cor:singular}, is that if the singular locus of $Z$ has dimension $s$, then for all $p \ge 0$ we have 
$$\cH^i(\underline{\Omega}_Z^p)=0 \,\,\,\,\,\,{\rm for} \,\,\,\,\,\, 0 <  i < \dim Z - s - p - 1.$$
In particular this applies to non-Du Bois singularities as well; see Remark \ref{non-DB}.

\smallskip

\noindent
{\bf Vanishing results.}
In view of the connection between the Du Bois complex and sheaves of forms with log poles on a resolution, established by Steenbrink \cite{Steenbrink}, Theorem~\ref{thm1_intro} implies (in fact is almost equivalent to) a local vanishing result for direct images of such sheaves. 
From now on, we assume that $\mu\colon Y\to X$  is a proper morphism that is an isomorphism over $X\smallsetminus Z$, such that $Y$ is smooth and $E=(\mu^*Z)_{\rm red}$ is a simple normal crossing divisor. 

\begin{cor}\label{cor1_intro}
If $p$ is a nonnegative integer such that $\widetilde{\alpha}(Z)\geq p+1$, then
$$R^i \mu_*\Omega_Y^p({\rm log}\,E)(-E)=0\quad\text{for}\quad i>0.$$
\end{cor}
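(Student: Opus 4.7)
The plan is to derive the corollary from Theorem \ref{thm1_intro} by means of the standard distinguished triangle linking the Du Bois complex of $Z$ to log differentials on the resolution.

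First, invoking \cite{Steenbrink}: one applies $R\mu_*$ to the short exact sequence
\begin{equation*}
0 \to \Omega_Y^p(\log E)(-E) \to \Omega_Y^p \to \widetilde{\Omega}_E^p \to 0
\end{equation*}
on $Y$ (where $\widetilde{\Omega}_E^p$ denotes the indicated cokernel) and uses that $R(\mu|_E)_*\widetilde{\Omega}_E^p$ computes $\underline{\Omega}_Z^p$, to obtain a distinguished triangle in $D^b_{\mathrm{coh}}(X)$:
\begin{equation*}
R\mu_*\Omega_Y^p(\log E)(-E) \to \Omega_X^p \to \underline{\Omega}_Z^p \xrightarrow{+1}.
\end{equation*}
As a sanity check, when $Z$ is itself smooth this specializes to the classical residue sequence $0 \to \Omega_X^p(\log Z)(-Z) \to \Omega_X^p \to \Omega_Z^p \to 0$.

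Next, Theorem \ref{thm1_intro} together with the hypothesis $\widetilde{\alpha}(Z) \geq p+1$ gives that the canonical morphism $\Omega_Z^p \to \underline{\Omega}_Z^p$ is a quasi-isomorphism, so $\underline{\Omega}_Z^p$ sits in degree $0$ and equals the sheaf $\Omega_Z^p$ of K\"ahler differentials. The triangle becomes
\begin{equation*}
R\mu_*\Omega_Y^p(\log E)(-E) \to \Omega_X^p \to \Omega_Z^p \xrightarrow{+1},
\end{equation*}
in which the middle and right-hand terms are sheaves concentrated in cohomological degree zero, and the arrow between them is the natural restriction $\omega \mapsto \omega|_Z$; this is surjective because locally $\Omega_Z^p$ is generated by the images of the coordinate $p$-forms from $\Omega_X^p$.

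Finally, I would read off the vanishing from the associated long exact sequence of cohomology sheaves: for $i \geq 2$ both $\cH^i(\Omega_X^p)$ and $\cH^{i-1}(\Omega_Z^p)$ vanish, giving $R^i\mu_*\Omega_Y^p(\log E)(-E) = 0$ immediately, while for $i=1$ one reads off
\begin{equation*}
\Omega_X^p \to \Omega_Z^p \to R^1\mu_*\Omega_Y^p(\log E)(-E) \to 0,
\end{equation*}
and the surjectivity of the left arrow forces $R^1\mu_*\Omega_Y^p(\log E)(-E) = 0$ as well. The main substantive input is Theorem \ref{thm1_intro}; once Steenbrink's distinguished triangle is in hand the deduction is purely formal, so there is no genuine obstacle beyond the theorem itself.
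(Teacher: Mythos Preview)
Your proposal is correct and follows essentially the same approach as the paper: both invoke Steenbrink's exact triangle (\ref{eq_exact_trg}), apply Theorem~\ref{thm1_intro} to identify $\underline{\Omega}_Z^p$ with $\Omega_Z^p$, and use the surjectivity of $\Omega_X^p\to\Omega_Z^p$ to read off the vanishing from the long exact sequence. The paper's proof is simply a more compressed version of yours.
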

This is an extension of the following fact, due to Steenbrink \cite[Proposition 3.3]{Steenbrink} and to Schwede \cite[Theorem 4.3]{Schwede} in a more general setting: if $Z$ is Du Bois, then the canonical morphism $\cO_Z \to \R \mu_* \cO_E$ is a quasi-isomorphism;\footnote{These two conditions are in fact equivalent, even when $Z$ is not necessarily a hypersurface in a smooth variety.} this translates into the vanishing of $R^i \mu_* \cO_Y (-E)$ for $i >0$.

We also deduce from Theorem \ref{thm1_intro} the following version of global Akizuki-Nakano vanishing for hypersurfaces with high minimal exponent.

\begin{cor}\label{cor0_intro}
If $p$ is a nonnegative integer such that $\widetilde{\alpha}(Z)\geq p+1$, then for every ample line bundle $L$ on $Z$, we have
$$H^q(Z, \Omega^p_Z\otimes L)=0\quad \text{for}\quad q>n-1-p.$$
\end{cor}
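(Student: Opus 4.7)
The plan is to reduce Corollary \ref{cor0_intro} to an Akizuki--Nakano type vanishing for the Du Bois complex itself, which is already available in the literature, and then apply Theorem \ref{thm1_intro} to pass back from $\underline{\Omega}_Z^p$ to $\Omega_Z^p$. Since $L$ is ample on $Z$, the variety $Z$ is automatically projective, which is the setting in which such a global vanishing makes sense.

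First, using the hypothesis $\widetilde{\alpha}(Z)\geq p+1$, Theorem \ref{thm1_intro} produces a quasi-isomorphism $\Omega_Z^p\simeq \underline{\Omega}_Z^p$ in the derived category of coherent sheaves on $Z$. Because $L$ is a line bundle, and in particular flat, tensoring with $L$ preserves this quasi-isomorphism, so that there are natural identifications
$$H^q(Z,\Omega_Z^p\otimes L)\;\cong\;\mathbb{H}^q(Z,\underline{\Omega}_Z^p\otimes L)$$
for every $q$. It therefore suffices to establish the hypercohomological vanishing on the right.

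Next, I would invoke the extension of the Akizuki--Nakano vanishing theorem to singular projective varieties via the Du Bois complex: for any projective variety $Z$ of dimension $d$ and any ample line bundle $L$ on $Z$, one has $\mathbb{H}^q(Z,\underline{\Omega}_Z^p\otimes L)=0$ whenever $p+q>d$. This is precisely the role of $\underline{\Omega}_Z^p$ alluded to in the introduction, and was established by Guill\'en--Navarro Aznar--Pascual Gainza--Puerta \cite{GNPP} (it can also be recovered from M.\ Saito's theory of mixed Hodge modules). Applying this with $d=\dim Z=n-1$ yields the vanishing of $\mathbb{H}^q(Z,\underline{\Omega}_Z^p\otimes L)$ for $q>n-1-p$, and combined with the previous identification this gives exactly the statement of the corollary. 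I do not expect any genuine obstacle here: once Theorem \ref{thm1_intro} is in hand, the corollary is essentially immediate from a classical Hodge-theoretic vanishing for $\underline{\Omega}_Z^\bullet$, and the only mild point to check is that the ampleness of $L$ indeed puts us in the projective setting where that vanishing applies.
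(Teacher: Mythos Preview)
Your proposal is correct and follows exactly the same approach as the paper: apply Theorem~\ref{thm1_intro} to identify $\Omega_Z^p$ with $\underline{\Omega}_Z^p$, then invoke the Akizuki--Nakano vanishing for the Du Bois complex from \cite[Theorem~V.5.1]{GNPP}. Your additional remarks (that ampleness of $L$ forces $Z$ to be projective, and that tensoring with a line bundle preserves the quasi-isomorphism) are correct elaborations that the paper leaves implicit.
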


Using the same approach as in the proof of 
Theorem \ref{thm1_intro}, 
we also obtain a vanishing result under a slightly weaker assumption on the minimal exponent:

\begin{thm}\label{thm3_intro}
If $q$ is a nonnegative integer such that $q \le \widetilde{\alpha}(Z)$, then
$$\cH^{n-q-1}(\underline{\Omega}_Z^q)=0$$ 
unless $q=n-1$; moreover this last equality can only hold if either $Z$ is smooth or $q =1$ and $Z$ is a nodal curve on a surface. In particular, we have
\begin{equation}\label{eq_thm3_intro}
R^{n-q} \mu_*\Omega_Y^q({\rm log}\,E)(-E)=0.
\end{equation}
\end{thm}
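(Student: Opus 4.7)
The plan is to refine the argument used for Theorem~\ref{thm1_intro} (equivalently, Theorem~\ref{stronger-vanishing}) to the threshold hypothesis $q\le\widetilde{\alpha}(Z)$, where we can only hope to control the top cohomology of $\underline{\Omega}_Z^q$ rather than all of its higher cohomologies. The two assertions of the theorem are linked through the Steenbrink-type identification relating $\underline{\Omega}_Z^q$ and $R\mu_*\Omega_Y^q({\rm log}\,E)(-E)$ (essentially the same identification underlying Corollary~\ref{cor1_intro}), and they will be established together.

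Following the strategy used for Theorem~\ref{thm1_intro}, I would work with the mixed Hodge module $j_*\mathbb{Q}_U^H[n]$ (where $j\colon U=X\smallsetminus Z\hookrightarrow X$), whose underlying $\cD_X$-module is $\cO_X(*Z)$ and whose Hodge filtration is encoded by the Hodge ideals $I_k(Z)$. The higher cohomology sheaves $\cH^i(\underline{\Omega}_Z^q)$ for $i>0$ can be described in terms of graded pieces of the microlocal $V$-filtration on $\cO_X(*Z)$ at filtration level $q+1$. Under the hypothesis $q\le\widetilde{\alpha}(Z)-1$ of Theorem~\ref{thm1_intro}, all such graded pieces vanish; under the weaker threshold hypothesis $q\le\widetilde{\alpha}(Z)$ the only surviving graded piece is the one corresponding to the threshold root $-\widetilde{\alpha}(Z)$ of the reduced Bernstein--Sato polynomial.

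The technical heart of the argument is then to show that this residual contribution can affect $\cH^i(\underline{\Omega}_Z^q)$ only in degrees $i\le n-q-2$, preserving the vanishing of the top cohomology $\cH^{n-q-1}(\underline{\Omega}_Z^q)$. I expect this to follow from an Artin-type dimension estimate for the Hodge module associated with the threshold graded piece: it is concentrated in a single cohomological degree and its support has dimension at most $n-1$, so its image under the de Rham functor has cohomological amplitude strictly less than $n-q-1$. This is the main obstacle and will require the microlocal machinery from \cite{Saito-MLCT, MP2}.

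Finally, the characterization of the exceptional case $q=n-1$ is combinatorial: if $Z$ is singular at a point of multiplicity $m\ge 2$, then Saito's upper bound $\widetilde{\alpha}(Z)\le n/m$ combined with the hypothesis $\widetilde{\alpha}(Z)\ge n-1$ forces $m(n-1)\le n$, hence $n=2$ and $m=2$. Among reduced plane curve singularities of multiplicity~$2$, only the node achieves $\widetilde{\alpha}=1$, while the cusps $A_k$ with $k\ge 2$ have $\widetilde{\alpha}<1$, which isolates precisely the nodal curve case.
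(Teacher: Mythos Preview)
Your proposal identifies the right shape of the argument but leaves the central step unproved. You write that controlling the ``residual contribution'' is ``the main obstacle'' and that you ``expect'' it to follow from an Artin-type dimension estimate; this is a hope, not a proof, and the statement about the de Rham functor having ``cohomological amplitude strictly less than $n-q-1$'' is not formulated precisely enough to verify. Moreover, framing the problem through the microlocal $V$-filtration and ``the threshold root'' is an unnecessary detour: the link between $\widetilde{\alpha}(Z)$ and the cohomology of $\underline{\Omega}_Z^q$ goes through the elementary comparison of the Hodge filtration with the pole order filtration, not through the microlocal machinery.

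The paper's argument is far more direct. By Lemma~\ref{eq_DuBois} one has
\[
\cH^{n-q-1}(\underline{\Omega}_Z^q)\simeq \mathcal{E}xt^n_{\cO_X}\big({\rm Gr}_{q-n}^F{\rm DR}_X(\cH^1_Z(\cO_X)),\omega_X\big).
\]
Under the hypothesis $\widetilde{\alpha}(Z)\ge q$, Lemma~\ref{lem_translation1} says the comparison map $\varphi_q$ to the explicit pole-order complex $C_q^\bullet$ is injective with cokernel a \emph{single sheaf} $\cF$ in degree~$0$. Hence $\mathcal{E}xt^{n+1}_{\cO_X}(\cF,\omega_X)=0$ simply because $\dim X=n$, and one is reduced to showing $\mathcal{E}xt^n_{\cO_X}(C_q^\bullet,\omega_X)=0$. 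This is a concrete computation using the short exact sequence relating $C_q^\bullet$ and $C_{q-1}^\bullet$ (see (\ref{eq1_lem_van1})), together with the vanishing $\mathcal{E}xt^{n-q}_{\cO_X}(\Omega_X^{n-q}\vert_Z,\omega_X)=0$ for $q\neq n-1$ and Lemma~\ref{vanishing_for_C} for $q\neq n$. So the ``residual contribution'' you worry about is dispatched by the trivial observation that a coherent sheaf on an $n$-dimensional smooth variety has no $\mathcal{E}xt^{n+1}$, not by any subtle amplitude estimate. Your treatment of the exceptional case $q=n-1$ is essentially correct and agrees with the paper's.
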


Note that $\underline{\Omega}_Z^q$ for $q = \lfloor \widetilde{\alpha}(Z) \rfloor$ is the first graded piece of the Du Bois complex which is not covered by Theorem \ref{thm1_intro}. Theorem~\ref{thm3_intro} shows that the highest degree cohomology sheaf  of this graded piece that could possibly be non-trivial, does in fact vanish. The fact that this is the 
top possible non-trivial cohomology is a consequence of the general vanishing $\cH^{p}(\underline{\Omega}_Z^q)=0$ for $p \geq n-q$ and every $q$. 
This in turn is related to a theorem of Steenbrink, see \cite[Theorem~2]{Steenbrink}, stating that
$$R^i \mu_*\Omega^j_Y({\rm log}\,E)(-E)=0\quad \text{for}\quad i+j>n$$
(in the case of hypersurfaces this is easy to prove, see Section~\ref{rmk_Steenbrink}, but Steenbrink's result holds more generally when $Z$ has
arbitrary codimension in $X$).

\noindent
\emph{Remark.}
When $q=0$, the vanishing in (\ref{eq_thm3_intro}) is trivial, while for $q=1$ it is a special case of a result of
Greb, Kov\'{a}cs, Kebekus and Peternell, see \cite[Theorem~14.1]{GKKP}, which applies
to general log canonical pairs. It is also interesting to note that a related result, namely
$$R^{n-q} \mu_*\Omega_Y^q({\rm log}\,E) = 0  \,\,\,\,\,\,{\rm for} \,\,\,\,\,\,q \le \lceil \widetilde{\alpha}(Z)\rceil$$
appears in \cite[Corollary~C]{MP3}. Despite the similarity, its proof is of a very different flavor.

\smallskip

\noindent
{\bf Non-vanishing result and applications.}
Changing gears, we also give a non-vanishing result for the cohomology of certain graded pieces of the Du Bois complex 
when the minimal exponent is large. 

\begin{thm}\label{thm4_intro}
Suppose that $Z$ is defined in $X$ by $f\in\cO_X(X)$. If $p\geq 2$ is an integer such that $\widetilde{\alpha}(Z)>p$,
then for every singular point $x\in Z$, the following hold:
\begin{enumerate}
\item[i)] We have an isomorphism $\cH^{p-1}(\underline{\Omega}_Z^{n-p})_x\simeq \cO_{X,x}/\big(J_f+(f)\big)$, where $J_f$
is the Jacobian ideal of $f$.\footnote{In an open subset with algebraic coordinates $x_1,\ldots,x_n$, the ideal $J_f$ is generated by $\partial f/\partial x_1,\ldots,\partial f/\partial x_n$.} In particular, $\cH^{p-1}(\underline{\Omega}_Z^{n-p})_x\neq 0$.
\item[ii)] If $x$ is an isolated singularity of $Z$ and $p\geq 3$, then 
$\cH^{p-2}(\underline{\Omega}_Z^{n-p})_x\simeq (J_f : f)/J_f$. In particular, $\cH^{p-2}(\underline{\Omega}_Z^{n-p})_x\neq 0$
${\rm (}$while $\cH^{i}(\underline{\Omega}_Z^{n-p})_x=0$
for $0< i< p-2$${\rm )}$.
\end{enumerate}
\end{thm}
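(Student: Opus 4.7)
The plan is to identify $\underline{\Omega}_Z^{n-p}$, via Saito's theory of mixed Hodge modules, with an explicit complex on $X$ built from the defining equation $f$ and its partial derivatives, and then read off the local cohomologies at the singular point $x$ directly from this presentation. The key input is the description of the Hodge filtration on the $\cD$-module $\cO_X(*Z)$ in terms of the Hodge ideals $I_k(Z)$, together with the fact from \cite{MP1} that the assumption $\widetilde{\alpha}(Z)>p$ forces $I_k(Z)=\cO_X$ for $0\leq k\leq p-1$.

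First I would apply $\mathrm{gr}_F \mathrm{DR}$ in the appropriate degree to the short exact sequence of filtered $\cD$-modules underlying mixed Hodge modules
$$0\to \cO_X\to \cO_X(*Z)\to \cH^1_Z(\cO_X)\to 0.$$
Combined with Steenbrink's identification $\underline{\Omega}_Z^\bullet \simeq R\mu_*\Omega_Y^\bullet(\log E)(-E)$ recalled in the introduction, this produces a distinguished triangle relating $\underline{\Omega}_Z^{n-p}$ to $\Omega_X^{n-p}$ and $\mathrm{gr}_F \mathrm{DR}(\cO_X(*Z))$ in the relevant weight, with appropriate shifts. Under $\widetilde{\alpha}(Z)>p$ all Hodge ideals $I_k(Z)$ with $k<p$ are trivial, so locally near $x$, after choosing algebraic coordinates $x_1,\ldots,x_n$, this graded de Rham complex collapses to a shift of the Koszul complex $K_\bullet(f,\partial_1 f,\ldots,\partial_n f)$ on $\cO_{X,x}$.

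The assertions of the theorem then follow from the standard Koszul cohomology computation. For part~(i), the top cohomology $\cO_{X,x}/(J_f+(f))$ of $K_\bullet(f,\partial_1 f,\ldots,\partial_n f)$ produces the claimed isomorphism $\cH^{p-1}(\underline{\Omega}_Z^{n-p})_x\simeq \cO_{X,x}/(J_f+(f))$, which is nonzero since $x$ is singular and hence $J_f+(f)\subseteq \fm_x$. For part~(ii), the hypothesis that $x$ is an \emph{isolated} singularity of $Z$ ensures $\partial_1 f,\ldots,\partial_n f$ form a regular sequence in $\cO_{X,x}$; consequently $K_\bullet(\partial_1 f,\ldots,\partial_n f)$ resolves $\cO_{X,x}/J_f$, and tensoring with $K_\bullet(f) = [\cO_{X,x}\xrightarrow{\cdot f}\cO_{X,x}]$ shows that the next-to-top cohomology is $(J_f:f)/J_f$ while all lower cohomologies vanish, matching both claims of (ii).

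The main obstacle will be the first step: setting up the filtered distinguished triangle correctly and tracking all shifts, twists, and signs in the derived category so that one truly obtains the clean Koszul identification on the nose. The \emph{strict} inequality $\widetilde{\alpha}(Z)>p$ is used critically here, since beyond forcing $I_{p-1}(Z)=\cO_X$ it provides the sharp agreement of the Hodge filtration with the pole-order filtration in the relevant range that is needed to collapse $\mathrm{gr}_F\mathrm{DR}(\cO_X(*Z))$ to the expected Koszul shape. The numerical hypotheses $p\geq 2$ in (i) and $p\geq 3$ in (ii) are exactly what place the cohomological degrees $p-1$ and $p-2$ in the range where this Koszul picture faithfully captures $\cH^i(\underline{\Omega}_Z^{n-p})_x$.
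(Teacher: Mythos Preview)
Your proposal has a genuine gap at the very first step. The distinguished triangle you want does not exist as stated: passing to graded de Rham complexes in the short exact sequence $0\to\cO_X\to\cO_X(*Z)\to\cH^1_Z(\cO_X)\to 0$ identifies ${\rm Gr}^F_{p-n}{\rm DR}_X(\cO_X(*Z))$ with $\R\mu_*\Omega_Y^{n-p}(\log E)[p]$ (no twist by $-E$), whereas Steenbrink's triangle for $\underline{\Omega}_Z^{n-p}$ involves $\R\mu_*\Omega_Y^{n-p}(\log E)(-E)$. The passage between the two is Grothendieck duality, which is exactly the content of Lemma~\ref{eq_DuBois} and formula~(\ref{eq_DuBois2}). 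Thus $\underline{\Omega}_Z^{n-p}$ is not a cone on a map $\Omega_X^{n-p}\to{\rm Gr}^F{\rm DR}_X(\cO_X(*Z))$; it is (a shift of) ${\rm Gr}^F_{p}{\rm DR}_X\big({\mathbf D}(\cH^1_Z(\cO_X))\big)$, and the whole problem is to compute this \emph{dual} object.

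More seriously, even after inserting the missing duality, the Hodge-ideal input you invoke is not strong enough. The hypothesis $\widetilde{\alpha}(Z)>p$ forces $I_k(Z)=\cO_X$ only for $k\leq p-1$; it says nothing about $I_k(Z)$ for $k\geq p$. But describing $\underline{\Omega}_Z^{n-p}$ via ${\rm Gr}^F_{-p}{\rm DR}_X(\cH^1_Z(\cO_X))$ and then dualizing requires the graded pieces ${\rm Gr}^F_k\cH^1_Z(\cO_X)$ for all $0\leq k\leq n-p$, and these are uncontrolled once $k\geq p$ (the typical situation, since $\widetilde{\alpha}(Z)\leq n/2$ for singular $Z$). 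This is precisely why the pole-order approach of Section~3 succeeds for $\underline{\Omega}_Z^q$ with small $q$ (Theorems~\ref{thm1_intro} and~\ref{thm3_intro}) but cannot reach $\underline{\Omega}_Z^{n-p}$. The paper instead computes the dual ${\mathbf D}(\cH^1_Z(\cO_X))$ directly, via Saito's duality for nearby and vanishing cycles and the $V$-filtration on $B_f=\iota_+\cO_X$: the strict inequality $\widetilde{\alpha}(Z)>p$ is equivalent to $\partial_t^p\delta\in V^{>0}B_f$, and it is this $V$-filtration statement---not the triviality of Hodge ideals---that collapses the relevant graded de Rham complex \emph{on the dual side} to the stupid truncation $\sigma^{\geq -p+1}$ of the Koszul complex on $\cO_X/(f)$ in the partial derivatives $\partial_if$. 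Your acknowledged ``main obstacle'' is therefore not merely bookkeeping of shifts and signs; it is the substantive input that your outline is missing.
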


Regarding the statement, it is worth noting that, as before, $\cH^{p-1}$ is the top possible nonzero cohomology of 
$\underline{\Omega}_Z^{n-p}$; see Section \ref{rmk_Steenbrink}. Though the starting point is similar, the proof is somewhat different from that of the vanishing results, in that it appeals to the $V$-filtration (and its connection with the minimal exponent), as well as to duality for nearby and vanishing cycles.

The non-vanishing result has some interesting consequences. The first stems from the fact that 
if $Y$ is a variety with quotient or toroidal singularities, then $\cH^i(\underline{\Omega}_Y^p)=0$ for all $i\geq 1$ and all $p$; for quotient singularities, see \cite[Section~5]{DuBois}, and for toroidal singularities, see \cite[Chapter~V.4]{GNPP}. Thanks to Theorem~\ref{thm4_intro}, we deduce that in these cases minimal exponents are surprisingly rather small:

\begin{cor}\label{ME-quotient}
If $Z$ is singular and has quotient or toroidal singularities, then $1 < \widetilde{\alpha}(Z)\leq 2$. 
\end{cor}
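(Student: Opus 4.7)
The plan is to establish the two inequalities separately. For the lower bound $\widetilde{\alpha}(Z)>1$, I would combine Saito's characterization with the fact that quotient and toroidal singularities are rational. For the upper bound $\widetilde{\alpha}(Z)\leq 2$, I would use Theorem~\ref{thm4_intro} in contrapositive form, together with the vanishing of positive-degree cohomology of $\underline{\Omega}_Z^\bullet$ for varieties with quotient or toroidal singularities that was recalled in the paragraph preceding the corollary.

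For the lower bound, I would rely on the classical fact that quotient singularities are rational (following for instance from Boutot's theorem, or already from the averaging trick for finite group actions in characteristic zero) and that normal toric varieties are Cohen--Macaulay with rational singularities. Hence $Z$ has rational singularities, and the criterion of M.~Saito recalled at the start of the section, which states that a reduced hypersurface in a smooth variety has rational singularities if and only if $\widetilde{\alpha}(Z)>1$, gives the lower bound at once.

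For the upper bound, I would argue by contradiction. Assume $\widetilde{\alpha}(Z)>2$. Since $Z$ is singular, there exists a point $x$ in the singular locus. The hypotheses $p\geq 2$ and $\widetilde{\alpha}(Z)>p$ are both satisfied for $p=2$, so Theorem~\ref{thm4_intro}(i) applied at $x$ yields
$$\cH^{1}(\underline{\Omega}_Z^{n-2})_x\neq 0.$$
On the other hand, for a variety with quotient or toroidal singularities one has $\cH^i(\underline{\Omega}_Z^q)=0$ for all $i\geq 1$ and all $q$, by \cite[Section~5]{DuBois} in the quotient case and \cite[Chapter~V.4]{GNPP} in the toroidal case. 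This is a direct contradiction, so $\widetilde{\alpha}(Z)\leq 2$.

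I do not anticipate any substantial obstacle, since all the heavy lifting is contained in Theorem~\ref{thm4_intro} itself. The only minor checks are that the singular locus is non-empty (which is part of the hypothesis) and that $n-2\geq 0$, which is automatic: a normal one-dimensional variety is smooth, so a singular $Z$ with quotient or toroidal singularities must have $\dim Z\geq 2$, and hence $n\geq 3$.
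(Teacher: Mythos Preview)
Your proposal is correct and follows exactly the argument the paper intends: the lower bound comes from the fact that quotient and toroidal singularities are rational together with Saito's criterion $\widetilde{\alpha}(Z)>1\Leftrightarrow Z$ has rational singularities, and the upper bound is the contrapositive of Theorem~\ref{thm4_intro}(i) with $p=2$ combined with the vanishing $\cH^i(\underline{\Omega}_Z^q)=0$ for $i\geq 1$ recalled just before the corollary. Your additional check that $n\geq 3$ is a nice point the paper leaves implicit.
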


Note that the upper bound is sharp: the hypersurface defined by 
$x_1x_2-x_3x_4$ in ${\mathbf C}^4$ is toric and its minimal exponent is $2$; see for instance the paragraph after Theorem \ref{thm1_intro}. The lower bound is due to the fact that these are rational singularities.

The second consequence is that the cohomology sheaves $\cH^i (\underline{\Omega}_Z^p)$, with $p, i \ge 1$, are not upper semicontinuous in families.
This should be contrasted with a result for $p =0$ (when $Z$ is not necessarily a hypersurface)
due to Kov\'acs and Schwede \cite{KS2}, who have shown that nearby deformations of Du Bois singularities are again Du Bois. 

\begin{eg}
Let $f, g \in \C[X_1, \ldots, X_n]$ with $n \ge 5$, be chosen so that $f$ defines a hypersurface with quotient singularities, with a singular point at $0$
(hence $\widetilde{\alpha}_0(f)\leq 2$ by Corollary \ref{ME-quotient}) while $g$ defines a hypersurface with a singular point at $0$ and such that
$\widetilde{\alpha}_0(g)> 2$. Consider the family 
of hypersurfaces parametrized by $\A^1$, defined by $h_t: = t f + (1-t) g$. For $t=1$ we have a hypersurface with quotient singularities, 
hence $\cH^i(\underline{\Omega}_{Z(h_1)}^p)=0$ for all $i\geq 1$ and all $p$. On the other hand, the minimal exponent is 
lower semicontinuous in families, see \cite[Theorem~E(2)]{MP2}, hence for general $t$ we have $\widetilde{\alpha}_0(h_t)> 2$
(and the hypersurface $Z(h_t)$ has a singular point at $0$). Theorem \ref{thm4_intro} then implies that $\cH^{1}(\underline{\Omega}_{Z(h_t)}^{n-2}) \neq 0$.
\end{eg}

We note that since the first version of this paper was written, further progress has been made on this topic: the converse of Theorem~\ref{thm1_intro} 
was proved in \cite{Saito_et_al}, while an analogue of both implications in the case of local complete intersections was proved in \cite{MP4}.

\noindent
{\bf Outline and acknowledgement.}
The paper is organized as follows: we begin by reviewing in the next section some basic facts about the minimal exponent,
the Hodge filtration on the local cohomology sheaf $\cH^1_Z(\cO_X)$, and the
graded pieces of the Du Bois complex. In particular, we recall the description of these graded pieces
in terms of the de Rham complex of $\cH^1_Z(\cO_X)$. The proofs of Theorems~\ref{thm1_intro} and \ref{thm3_intro} are given in Section~3, while the proof of Theorem~\ref{thm4_intro} is the content of Section~4.

We thank the referees for comments that helped us substantially improve the exposition.

\section{Review of the Du Bois complex, Hodge filtration, and minimal exponent}\label{review}

In this section we review some basic facts about the objects in its title that we will need for the proofs of our main results. 

\subsection{Conventions and notation}\label{section_notation}
By a variety we always mean a reduced, separated scheme of finite type over $\C$, possibly reducible.
As in the Introduction, $X$ stands for a smooth, irreducible, $n$-dimensional variety and $Z$ is a nonempty reduced hypersurface in $X$. 

We consider a proper morphism $\mu\colon Y\to X$ that is an isomorphism over $X\smallsetminus Z$, such that $Y$ is smooth and $E=(\mu^*Z)_{\rm red}$ is a simple normal crossing divisor. Such a morphism exists by Hironaka's theorem and, in fact, can be chosen to be projective (but we do not make this assumption unless explicitly mentioned otherwise).

\subsection{Du Bois complex}\label{section_DuBois}
For an introduction to the \emph{Du Bois complex} (sometimes called the \emph{filtered De Rham complex}) and its basic properties, we refer to \cite[Chapter V.3]{GNPP},  \cite[Chapter~7.3]{PetersSteenbrink}, \cite{Steenbrink}, and to the original paper of Du Bois \cite{DuBois}. A useful list of properties is also collected together in \cite[Theorem~4.2]{KS1}.

Recall that for a variety $W$, this is a filtered complex denoted $(\underline{\Omega}_W^{\bullet},F^{\bullet})$. We will only be interested in its graded pieces, suitably shifted:
$$\underline{\Omega}_W^p:={\rm Gr}_F^p\underline{\Omega}_W^{\bullet}[p].$$
For every $p$ this is an element in the bounded derived category of coherent sheaves on $W$, which can be nonzero only when $0 \le p \le \dim W$; moreover, there is a canonical morphism
$$\Omega_W^p \to\underline{\Omega}_W^p,$$ 
which is an isomorphism if $W$ is smooth. The variety $W$ is said to have \emph{Du Bois singularities} if $\cO_W\to \underline{\Omega}^0_W$ is an isomorphism. 

Suppose now that $X$, $Z$, and $\mu$ are as in Section~\ref{section_notation}.
A key fact, due to Steenbrink \cite[Proposition~3.3]{Steenbrink}, is that for every $p$, we have an exact triangle in the derived category
\begin{equation}\label{eq_exact_trg}
\R \mu_*\Omega^p_Y({\rm log}\,E)(-E)\to\Omega_X^p\longrightarrow \underline{\Omega}_Z^p\overset{+1}{\longrightarrow}.
\end{equation}
We briefly recall the argument, for the benefit of the reader. Since $\mu$ is an isomorphism
over $X\smallsetminus Z$ and $X$ and $Y$ are smooth, we have an exact triangle
\begin{equation}\label{eq_exact_trg2}
\Omega_X^p\longrightarrow \R \mu_*\Omega_Y^p\oplus \underline{\Omega}_Z^p\longrightarrow \R \mu_*\underline{\Omega}_E^p \overset{+1}{\longrightarrow},
\end{equation}
see \cite[Proposition~4.11]{DuBois}. 
We apply the octahedral axiom for the composition
$$\R \mu_*\Omega_Y^p\overset{\alpha}\longrightarrow  \R \mu_*\Omega_Y^p \oplus \underline{\Omega}_Z^p \overset{\beta}\longrightarrow 
\R \mu_*\underline{\Omega}_E^p,$$
where $\alpha=({\rm id},0)$ and $\beta$ is given by the sum of the obvious morphisms. If $Q={\rm cone}(\beta\circ\alpha)$, then we deduce using (\ref{eq_exact_trg2}) that we have an exact triangle
$$\underline{\Omega}_Z^p\longrightarrow Q\longrightarrow \Omega_X^p[1] \overset{+1}{\longrightarrow}.$$
On the other hand, recall that $\underline{\Omega}_E^p=\Omega_Y^p/\Omega_Y^p({\rm log}\,E)(-E)$, see \cite[Example~7.25]{PetersSteenbrink}, which immediately implies that 
$Q[-1]\simeq \R \mu_*\Omega_Y^p({\rm log}\,E)(-E)$. We thus obtain (\ref{eq_exact_trg}).

\subsection{A consequence of Grothendieck duality}
We note that since $\big(\Omega_Y^p({\rm log}\,E)(-E)\big)^{\vee}\simeq
\Omega^{n-p}_Y({\rm log}\,E)\otimes\omega_Y^{-1}$, it follows from Grothendieck duality that
\begin{equation}\label{eq_remark2}
\R \cH om_{\cO_X}\big(\R \mu_*\Omega_Y^p({\rm log}\,E)(-E),\omega_X\big)\simeq \R \mu_*\Omega_Y^{n-p}({\rm log}\,E).
\end{equation}
Since $\R \cH om_{\cO_X}\big(-,\omega_X\big)$ is a duality, we deduce from (\ref{eq_remark2}) that we also have
\begin{equation}\label{eq2_remark2}
\R \cH om_{\cO_X}\big(\R \mu_*\Omega_Y^{n-p}({\rm log}\,E),\omega_X\big)\simeq \R \mu_*\Omega_Y^p({\rm log}\,E)(-E).
\end{equation}

\begin{rmk}
While we are interested in the case of hypersurfaces, the assertions in this and the previous section hold if $Z$ is an arbitrary
subvariety of a smooth, irreducible, $n$-dimensional variety.
\end{rmk}

\subsection{Filtered $\cD_X$-modules and duality}\label{subsection_DR_duality}
Let $\cD_X$ be the sheaf of differential operators on  $X$. Recall that if $\cM$ is a left $\cD_X$-module on $X$, then the de Rham complex of $\cM$ is the complex ${\rm DR}_X(\cM)$:
$$0\to \cM\to \Omega^1_X\otimes_{\cO_X}\cM\to\cdots\to\Omega_X^n\otimes_{\cO_X}\cM\to 0,$$
placed in cohomological degrees $-n,\ldots,0$, with the differentials defined using the usual de Rham differential and the integrable connection on $\cM$.
 If $(\cM,F)$ is a filtered $\cD_X$-module (so that the filtration is compatible with the order filtration on $\cD_X$), then ${\rm DR}_X(\cM)$
 carries an induced filtration, with $F_p{\rm DR}_X(\cM)$ being the subcomplex:
 $$0\to F_{p}\cM\to \Omega_X^1\otimes_{\cO_X}F_{p+1}\cM\to\cdots\to \Omega_X^n\otimes_{\cO_X}F_{p+n}\cM\to 0.$$
 
 We will be interested in the filtered $\cD_X$-modules associated to certain mixed Hodge modules in the sense of M. Saito's theory, see \cite{Saito-MHP}, \cite{Saito-MHM}. For such filtered $\cD_X$-modules there is a duality functor ${\mathbf D}$,  satisfying the following compatibility with the Grothendieck dual of the de Rham complex:
 \begin{equation}\label{eq_compatibility}
 {\rm Gr}_p^F{\rm DR}_X\big({\mathbf D}(\cM)\big)\simeq \R \cH om_{\cO_X}\big({\rm Gr}_{-p}^F{\rm DR}_X(\cM),\omega_X[n]\big)
 \end{equation} for every $p$, see \cite[2.4.5 and 2.4.11]{Saito-MHP}. (See also \cite[4.2.3]{Saito-MHM} for the fact that the 
 functor ${\mathbf D}$ preserves the category of mixed Hodge modules in the algebraic setting.)
 
 In Section~\ref{last_section} we will also make use of right $\cD_X$-modules.
 Recall that there is a canonical equivalence of categories between left and right $\cD_X$-modules such that if
$\cM^r$ is the right $\cD_X$-module corresponding to the left $\cD_X$-module $\cM$, then we have an isomorphism of $\cO_X$-modules
\begin{equation}\label{eq_right_Dmod}
\cM^r\simeq\omega_X\otimes_{\cO_X}\cM.
\end{equation}
For example, the right $\cD_X$-module corresponding to $\cO_X$ is $\omega_X$ and if $Z$ is a reduced hypersurface of $X$, then
the right $\cD_X$-module corresponding to $\cH^1_Z(\cO_X)$ is $\cH^1_Z(\omega_X)$.

We similarly have an equivalence between filtered left and right $\cD_X$-modules and the standard convention is that if $(\cM^r,F)$ corresponds to $(\cM,F)$, then the isomorphism (\ref{eq_right_Dmod}) identifies $F_{p-n}\cM^r$ with $\omega_X\otimes_{\cO_X}F_p\cM$. 
We also note that the filtered De Rham complex associated to $(\cM^r,F)$ can be taken to be the filtered De Rham complex of $(\cM,F)$;
in particular, we have 
$${\rm Gr}^F_{\bullet}{\rm DR}_X\big(\cH^1_Z(\omega_X)\big)={\rm Gr}^F_{\bullet}{\rm DR}_X\big(\cH^1_Z(\cO_X)\big).$$

\subsection{Localization, Hodge filtration, and minimal exponent}\label{scn:LHFME}
The $\cD_X$-module we are interested in is $\cO_X(*Z)$, the sheaf of rational functions on $X$ with poles along $Z$. This underlies a mixed Hodge 
module, hence in particular carries a Hodge filtration; for a detailed study of this filtration, see \cite{MP1}. It is known that the Hodge filtration is contained in the pole order filtration, i.e. for every $p\geq 0$ we have
$F_p\cO_X(*Z)\subseteq \cO_X\big((p+1)Z\big)$, which leads to the definition of the $p$-th Hodge ideal $I_p (Z)$
by the formula
$$F_p\cO_X(*Z) = \cO_X\big((p+1)Z\big) \otimes I_p (Z).$$
Note also that we have a short exact sequence 
$$0 \longrightarrow \cO_X \longrightarrow \cO_X(*Z) \longrightarrow {\mathcal H}_Z^1(\cO_X) :=\cO_X(*Z)/\cO_X 
\longrightarrow 0$$
of filtered $\cD_X$-modules, where $\cO_X$ underlies the trivial mixed Hodge module ${\mathbf Q}_X^H[n]$ and its filtration satisfies ${\rm Gr}^F_q \cO_X =0$ for all $q\neq 0$, while ${\mathcal H}_Z^1(\cO_X)$ coincides with the first local cohomology sheaf of $\cO_X$ along $Z$, and its Hodge filtration is induced by that on $\cO_X(*Z)$. 

\medskip

We now turn to the \emph{minimal exponent} $\widetilde{\alpha}(Z)$ of $Z$, which was originally defined by Saito in
\cite{Saito_microlocal} as the negative of the greatest root of the reduced Bernstein-Sato polynomial $b_Z (s)/ (s+1)$; it is therefore a refinement of the log canonical threshold ${\rm lct}(Z)$, which satisfies 
$${\rm lct}(Z) = {\rm min}\{\widetilde{\alpha}(Z), 1\}.$$
By convention, we have $\widetilde{\alpha}(Z)=\infty$ if and only if $b_Z(s)=s+1$, which is the case if and only if $Z$ is smooth. 
There is also a local version $\widetilde{\alpha}_x (Z)$ of this invariant around each point $x \in Z$, such that 
$\widetilde{\alpha} (Z) = \underset{x \in Z}{\rm min} ~\widetilde{\alpha}_x (Z)$.
See \cite[Section~6]{MP2} for a general discussion and study of the minimal exponent.

It turns out that the minimal exponent governs the complexity of the Hodge filtration in various ways. For instance, it determines how far the Hodge filtration agrees with the pole order filtration $P_\bullet$ on $\cO_X(*Z)$, defined by 
$$P_k\cO_X(*Z)=\cO_X\big((k+1)Z\big) \,\,\,\,\,\,{\rm  for} \,\,\,\,k\geq 0$$
and $P_k\cO_X(*Z)=0$ for $k<0$. Concretely, for a nonnegative integer $p$, we have
$$\widetilde{\alpha}(Z)\geq p+1 \iff F_k\cO_X(*Z)= P_k\cO_X(*Z) \quad\text{for}\quad k\leq p
\iff I_k (Z) = \cO_X\quad\text{for}\quad  k\leq p,$$
see \cite[Corollary~1]{Saito-MLCT}, and also \cite[Corollary C]{MP2}. Under these equivalent conditions we also say 
that the pair $(X,Z)$ is \emph{$p$-log-canonical}, as the case $p =0$ is precisely the case of log-canonical pairs.
It is this interpretation of the minimal exponent that will be used in this paper. 

We have the following numerical criteria for minimal exponents, which in practice can be used as concrete bounds in the context of the results in the Introduction:
\begin{itemize}
\item $\widetilde{\alpha}(Z)\geq 1 \iff Z$ has du Bois singularities $\iff (X, Z)$ is log-canonical. See 
\cite[Theorem~0.5]{Saito09} for the first equivalence, and \cite[Corollary~6.6]{KS1} for the second.
\item  $\widetilde{\alpha}(Z) > 1 \iff Z$ has rational singularities; see \cite[Theorem~0.4]{Saito93}.
\item  If a point $x \in Z$ has multiplicity $m \ge 2$, while the singular locus of its projectivized tangent cone $\P (C_x Z)$ 
has dimension $r$ (with $r = -1$ if $\P (C_x Z)$  is smooth), then 
\begin{equation}\label{eq_bound_min_exp}
\frac{n-r -1}{m} \le \widetilde{\alpha}_x(Z) \le \frac{n}{m};
\end{equation}
see \cite[Theorem~E]{MP2}. (The inequality $\widetilde{\alpha}_x(Z) \le \frac{n}{2}$ also follows from
\cite[Theorem~0.4]{Saito_microlocal}.) In particular 
$\widetilde{\alpha}_x(Z) = \frac{n}{m}$ if $x$ is an ordinary singular point.
\item If $Z$ has a weighted homogeneous isolated singularity, where the variable $x_i$ has weight $w_i$, then 
$\widetilde{\alpha}(Z) = \sum w_i$; see \cite[4.1.5]{Saito09}.
\item Let $\mu\colon Y \to X$ be a morphism as in Section~\ref{section_notation} such that, in addition, 
the strict transform $\widetilde{Z}$ of $Z$ is smooth (in other words, the strict transforms of the irreducible components of $Z$ are pairwise disjoint). 
Define integers $a_i$ and $b_i$ by the expressions
$$\mu^*Z = \widetilde{Z} + \sum_{i=1}^m a_iF_i\quad\text{and}\quad
K_{Y/X} = \sum_{i=1}^m b_iF_i,$$
where $F_1,\ldots,F_m$ are the prime exceptional divisors, and set 
$$\gamma : = \underset{i=1, \ldots, m}{\rm min} \left\{\frac{b_i +1}{a_i}\right\}.$$
Then we have $\widetilde{\alpha}(Z)\geq \gamma$; see \cite[Corollary~D]{MP2}, cf. also \cite[Corollary~1.5]{DM}.
\item The minimal exponent also provides a bound for the generation level of the Hodge filtration $F_\bullet \cO_X (*Z)$, shown in \cite[Theorem~A]{MP3} to be at most $n- 1 - \lceil \widetilde{\alpha}(Z) \rceil$.
\end{itemize}

It is shown in \cite[Proposition~7.4]{MP3} that if $Z$ is singular and $\widetilde{\alpha}(Z)>p$, for a nonnegative integer $p$, then
the codimension in $Z$ of the singular locus $Z_{\rm sing}$ is at least $2p$. We will need the following variant, that can be proved along the same lines:

\begin{lem}\label{lem_singular_locus}
If  $\widetilde{\alpha}(Z)\geq p+1$ for a nonnegative integer $p$,\footnote{It is worth noting that for \emph{any} effective divisor $D$ on $X$, the hypothesis $\widetilde{\alpha}(D)\geq 1$ automatically implies that $D$ is reduced. Indeed, otherwise its log canonical threshold satisfies ${\rm lct}(D) < 1$, hence $\widetilde{\alpha}(D) = {\rm lct}(D)$.}then
$${\rm codim}_Z(Z_{\rm sing})\geq 2p+1.$$
\end{lem}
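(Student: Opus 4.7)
The plan is to reduce to a pointwise bound on the minimal exponent in terms of the dimension of the singular locus, following very closely the argument of \cite[Proposition~7.4]{MP3}. Specifically, I aim to prove that at any point $x\in Z_{\sing}$ lying on a component $W$ of local dimension $s$, one has
$$\widetilde{\alpha}_x(Z)\leq \frac{n-s}{2}.$$
Once this is established, the hypothesis $\widetilde{\alpha}(Z)\geq p+1$, combined with $\widetilde{\alpha}(Z)\leq \widetilde{\alpha}_x(Z)$, gives $(n-s)/2\geq p+1$, hence $s\leq n-2p-2$; applied to a component of $Z_{\sing}$ of maximal dimension, this yields
$$\codim_Z(Z_{\sing}) = (n-1) - \dim Z_{\sing} \geq 2p+1.$$

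To establish the key inequality I would fix a general point $x$ of a top-dimensional component $W$ of $Z_{\sing}$. At such an $x$, generic analytic triviality of $Z$ along the smooth stratum $W$ provides local analytic coordinates in which a defining equation $f$ of $Z$ is independent of the $s$ coordinates tangent to $W$; equivalently, the germ $(Z,x)$ is analytically isomorphic to a product $(Z_0,0)\times (\C^s,0)$, where $Z_0\subset \C^{n-s}$ has an isolated singular point at the origin, of multiplicity $m\geq 2$ (since $x$ is a singular point of $Z$). Since the minimal exponent is unchanged upon taking a product with a smooth germ, $\widetilde{\alpha}_x(Z)=\widetilde{\alpha}_0(Z_0)$, and Saito's upper bound recorded in (\ref{eq_bound_min_exp}), applied to the isolated singularity $Z_0$, gives $\widetilde{\alpha}_0(Z_0)\leq (n-s)/m \leq (n-s)/2$, as required.

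The main technical point I expect is the generic analytic triviality statement; this is what allows the reduction to the isolated-singularity case where Saito's multiplicity bound applies. One can either invoke standard results on the local structure of Whitney stratifications, or argue directly, using that $f\in \cI_W^2$ near a general point of the smooth stratum $W$, that an analytic change of coordinates puts $f$ into a form depending only on the $n-s$ variables transverse to $W$. Once this is granted, the rest of the proof is the short arithmetic computation already sketched above.
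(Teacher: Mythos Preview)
Your overall strategy matches the paper's: reduce to an inequality of the form $\widetilde{\alpha}\leq (n-s)/2$ with $s=\dim Z_{\rm sing}$, then conclude by arithmetic. The gap is in how you justify this inequality.

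The assertion that $(Z,x)$ is analytically a product $(Z_0,0)\times(\C^s,0)$ at a general point of a component $W$ of $Z_{\rm sing}$ is not true in general, and neither of your proposed justifications establishes it. Whitney stratifications give only \emph{topological} local triviality along strata (Thom's first isotopy lemma), not analytic triviality; and the condition $f\in\cI_W^2$ merely encodes $W\subseteq Z_{\rm sing}$---in local coordinates with $W=\{z_1=\cdots=z_{n-s}=0\}$ it says $f=\sum h_{ij}(y,z)\,z_iz_j$, but gives no control over the dependence of the $h_{ij}$ on the $y$-variables. A concrete counterexample: take $F(\lambda,x,y)=xy(x-y)(x-\lambda y)$ in $\C^3$. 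The $\lambda$-axis is a top-dimensional component of the singular locus, yet the transverse slices $f_\lambda=xy(x-y)(x-\lambda y)$ are pairwise analytically inequivalent for generic distinct values of $\lambda$ (the cross-ratio of the four lines is an analytic invariant of the germ), so no product decomposition exists at any point of this axis.

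The paper---and in fact \cite[Proposition~7.4]{MP3}, which you cite as your model---avoids this issue by cutting with $s=\dim Z_{\rm sing}$ general hyperplanes instead. One uses that $\widetilde{\alpha}(Z\vert_Y)\geq\widetilde{\alpha}(Z)$ for a general hyperplane section $Y$ (\cite[Lemma~7.5]{MP3}); after $s$ such cuts the restriction $Z\vert_Y$ is a singular hypersurface in an $(n-s)$-dimensional smooth variety, and (\ref{eq_bound_min_exp}) gives $p+1\leq\widetilde{\alpha}(Z)\leq\widetilde{\alpha}(Z\vert_Y)\leq (n-s)/2$. This substitutes the (valid) restriction inequality for your appeal to a product formula, and the direction of the inequality is exactly what the argument requires.
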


\begin{proof}
We may and will assume that $Z$ is affine. If $q={\rm dim}(Z_{\rm sing})$, then after successively cutting $X$ with $q$ general hyperplane sections,
we obtain a smooth closed subvariety $Y$ of $X$, of codimension $q$, such that the divisor $Z\vert_Y$ is singular. Moreover, we have
$\widetilde{\alpha}(Z\vert_Y)\geq\widetilde{\alpha}(Z)\geq p+1$ by \cite[Lemma~7.5]{MP3}. In this case, it follows from (\ref{eq_bound_min_exp}) that
$p+1\leq \frac{n-q}{2}$, hence ${\rm codim}_Z(Z_{\rm sing}) =n-1-q\geq 2p+1$. 
\end{proof}

\subsection{The graded pieces of the Du Bois complex via the de Rham complex of ${\mathcal H}_Z^1(\cO_X)$}
The connection between the graded pieces of the Du Bois complex and the Hodge filtration on $\cO_X(*Z)$ is provided by the following result:

\begin{lem}\label{eq_DuBois}
For every $p$, there is an isomorphism
\begin{equation}\label{eq_eq_DuBois}
\underline{\Omega}_Z^p\simeq \R \cH om_{\cO_X}\big({\rm Gr}^F_{p-n}{\rm DR}_X(\cH^1_Z(\cO_X)),\omega_X\big)[p+1].
\end{equation}
\end{lem}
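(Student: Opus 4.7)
The plan is to combine three ingredients: the short exact sequence of filtered $\cD_X$-modules coming from local cohomology, M.~Saito's identification of the Hodge-filtered graded pieces of $\mathrm{DR}_X(\cO_X(*Z))$ via a log resolution, and Grothendieck duality as packaged in (\ref{eq2_remark2}). The comparison with $\underline{\Omega}_Z^p$ will then come through Steenbrink's exact triangle (\ref{eq_exact_trg}).

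First I would start from the short exact sequence
$$0 \to \cO_X \to \cO_X(*Z) \to \cH^1_Z(\cO_X) \to 0,$$
which is a sequence of filtered $\cD_X$-modules underlying a short exact sequence of mixed Hodge modules. By the strictness of the Hodge filtration, applying $\mathrm{Gr}^F_{p-n}\mathrm{DR}_X(-)$ yields an exact triangle. The first term is easy: since $\cO_X$ carries the trivial filtration concentrated in degree $0$, one computes directly from the definition in Section~\ref{subsection_DR_duality} that
$$\mathrm{Gr}^F_{p-n}\mathrm{DR}_X(\cO_X) \simeq \Omega_X^{n-p}[p].$$

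For the middle term, I would use the fact that $(\cO_X(*Z), F)$ is the filtered direct image $\mu_*(\cO_Y(*E), F)$ along the log resolution $\mu\colon Y\to X$, together with the classical identification on the snc side:
$$\mathrm{Gr}^F_{q}\mathrm{DR}_Y(\cO_Y(*E)) \simeq \Omega_Y^{-q}(\log E)[n+q].$$
Combined with the fact that for a proper morphism, filtered direct image commutes with $\mathrm{Gr}^F\mathrm{DR}$ (this is where the strict compatibility of $\mu_*$ with the Hodge filtration enters), this gives
$$\mathrm{Gr}^F_{p-n}\mathrm{DR}_X(\cO_X(*Z)) \simeq \R\mu_*\Omega_Y^{n-p}(\log E)[p].$$
Plugging these into the exact triangle identifies $\mathrm{Gr}^F_{p-n}\mathrm{DR}_X(\cH^1_Z(\cO_X))$ as the cone of the natural pullback $\Omega_X^{n-p}[p] \to \R\mu_*\Omega_Y^{n-p}(\log E)[p]$.

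Now I would apply $\R\cH om_{\cO_X}(-, \omega_X)$ to this triangle. The dual of $\Omega_X^{n-p}[p]$ is $\Omega_X^p[-p]$, while (\ref{eq2_remark2}) identifies the dual of $\R\mu_*\Omega_Y^{n-p}(\log E)[p]$ with $\R\mu_*\Omega_Y^p(\log E)(-E)[-p]$. Shifting by $p+1$ and rotating, the resulting triangle reads
$$\R\mu_*\Omega_Y^p(\log E)(-E) \to \Omega_X^p \to \R\cH om_{\cO_X}(\mathrm{Gr}^F_{p-n}\mathrm{DR}_X(\cH^1_Z(\cO_X)),\omega_X)[p+1] \xrightarrow{+1},$$
which is (up to sign of morphisms) exactly Steenbrink's triangle (\ref{eq_exact_trg}), yielding the desired isomorphism by the triangulated 5-lemma.

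The main technical obstacle is checking that the connecting morphisms in the two triangles indeed coincide---i.e.\ that the dual of the pullback map $\Omega_X^{n-p}\to \R\mu_*\Omega_Y^{n-p}(\log E)$ agrees with the natural map $\R\mu_*\Omega_Y^p(\log E)(-E) \to \Omega_X^p$ appearing in Steenbrink's construction. This is ultimately a naturality/functoriality check for Grothendieck duality composed with the log-Poincaré residue pairing, and should follow from tracing through the identifications. An alternative would be to appeal directly to the duality isomorphism (\ref{eq_compatibility}) and identify $\mathbf{D}(\cH^1_Z(\cO_X))$ in terms of another standard Hodge module, but the route above seems more concrete and keeps everything on the side of objects already appearing in the paper.
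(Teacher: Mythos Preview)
Your outline is essentially the same as the paper's proof: short exact sequence of filtered $\cD_X$-modules, identification of ${\rm Gr}^F_{p-n}{\rm DR}_X(\cO_X(*Z))$ via strictness of the filtered direct image along a projective log resolution, then Grothendieck duality and comparison with Steenbrink's triangle. For the ``technical obstacle'' you flag, the paper's concrete device is worth knowing: since the target of both candidate morphisms is (a shift of) a locally free sheaf on the smooth variety $X$, it suffices to check agreement on an open subset whose complement has codimension $\geq 2$, and restricting to the locus where $\mu$ is an isomorphism (so $Z$ is smooth) reduces the comparison to an explicit computation with the filtered resolutions of $\omega_X$ and $\omega_X(*Z)$.
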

\begin{proof}
Let $\mu\colon Y\to X$ be a morphism as in Section~\ref{section_notation}  assumed, in addition, to be projective. The explicit filtered resolution of the right 
$\cD_Y$-module $\omega_Y(*E)$
corresponding to $\cO_Y(*E)$ given in \cite[Proposition~3.1]{MP1} implies that we have
$${\rm Gr}^F_{p-n}{\rm DR}_Y\big(\cO_Y(*E)\big)\simeq \Omega_Y^{n-p}({\rm log}\,E)[p];$$
cf.\ \cite[Theorem~6.1]{MP1}.
Since $\cO_X(*Z)$ is the push-forward of $\cO_Y(*E)$ (in the category of mixed Hodge modules) and $\mu$ is projective, we obtain using 
Saito's Strictness Theorem, see \cite[Section~2.3.7]{Saito-MHP}, cf.\ \cite[Section C.4]{MP1}, that
$${\rm Gr}^F_{p-n}{\rm DR}_X\big(\cO_X(*Z)\big)\simeq \R \mu_*\Omega_Y^{n-p}({\rm log}\,E)[p].$$
Moreover, the canonical morphism 
$$\alpha\colon {\rm Gr}^F_{p-n}{\rm DR}_X(\cO_X)\to {\rm Gr}^F_{p-n}{\rm DR}_X\big(\cO_X(*Z)\big)$$
gets identified with a morphism
\begin{equation}\label{eq_ident_alpha}
\Omega_X^{n-p}[p]\to \R \mu_*\Omega_Y^{n-p}({\rm log}\,E)[p].
\end{equation}

Since ${\rm Gr}^F_{p-n}{\rm DR}_X(-)$ is an exact functor, we have an exact sequence of complexes
$$0\to {\rm Gr}^F_{p-n}{\rm DR}_X(\cO_X)\to {\rm Gr}^F_{p-n}{\rm DR}_X\big(\cO_X(*Z)\big)\to {\rm Gr}^F_{p-n}{\rm DR}_X(\cH_Z^1(\cO_X))\to 0,$$
which induces an exact triangle 
$$\R \cH om_{\cO_X}\big({\rm Gr}^F_{p-n}{\rm DR}_X(\cH_Z^1(\cO_X)),\omega_X\big)\longrightarrow \R \cH om_{\cO_X}\big({\rm Gr}^F_{p-n}{\rm DR}_X(\cO_X(*Z)),\omega_X\big)\overset{\beta}\longrightarrow$$
$$\overset{\beta}\longrightarrow\R \cH om_{\cO_X}\big({\rm Gr}^F_{p-n}{\rm DR}_X(\cO_X),\omega_X)\overset{+1}{\longrightarrow}.$$
As $\beta$ is the Grothendieck dual of $\alpha$, using (\ref{eq_ident_alpha}) and (\ref{eq2_remark2}) we can identify it with a morphism
\begin{equation}\label{identif_beta}
\R \mu_*\Omega^p_Y({\rm log}\,E)(-E)[-p]\longrightarrow \Omega_X^p[-p].
\end{equation}
We thus obtain the isomorphism in (\ref{eq_eq_DuBois}) from the exact triangle (\ref{eq_exact_trg}) if we show that the morphism (\ref{identif_beta}) is the same as the one in 
(\ref{eq_exact_trg}). Up to shift, the target of these morphisms is a locally free sheaf on a smooth variety, hence in order to show that they coincide it is enough to do so on an open subset
whose complement has codimension $\geq 2$. Since the definition of all morphisms we considered is compatible with restriction to open subsets of $X$, we can thus reduce to the case when
$\mu$ is an isomorphism so, in particular, $Z$ is smooth. In this case $\alpha$ is obtained by applying ${\rm Gr}^F_{p-n}{\rm DR}_X(-)$ to the following
morphism between the filtered resolutions of the right $\cD_X$-modules $\omega_X$ and $\omega_X(*Z)$:

\[\label{eq_diagram_alpha}
\xymatrix{
0 \ar[r] &\cD_X\ar[d]\ar[r] & \Omega_X^1\otimes_{\cO_X}\cD_X\ar[d]\ar[r] &\cdots\ar[r]  & \Omega_X^{n}\otimes_{\cO_X}\cD_X\ar[d]\ar[r] & 0 \\
0\ar[r] &\cD_X\ar[r] & \Omega_X^1({\rm log}\,E)\otimes_{\cO_X}\cD_X\ar[r] &\cdots\ar[r] & \Omega_X^n({\rm log}\,E)\otimes_{\cO_X}\cD_X\ar[r] & 0,
}
\]
in which the vertical morphisms are induced by the inclusions $\Omega_X^i\hookrightarrow\Omega_X^i({\rm log}\,E)$.
Using this, it is now straightforward to see that $\beta$ is equal to the morphism in the exact triangle (\ref{eq_exact_trg}).
\end{proof}

\begin{rmk}
Because of the compatibility between duality for mixed Hodge modules and duality for the corresponding de Rham
complexes in (\ref{eq_compatibility}), the isomorphism (\ref{eq_eq_DuBois}) is equivalent to an isomorphism
\begin{equation}\label{eq_DuBois2}
\underline{\Omega}_Z^p\simeq {\rm Gr}^F_{n-p}{\rm DR}_X\big({\mathbf D}(\cH^1_Z(\cO_X))\big)[p+1-n].
\end{equation}
The existence of a canonical such isomorphism was originally obtained by Saito, see \cite[Section 2]{Saito09}. 
While our arguments are more direct, they have the drawback that we only get an isomorphism in the derived category of $X$ (as opposed to the derived category of $Z$). Furthermore, this isomorphism is not uniquely determined, as it is associated to two different cones of a certain morphism. However, for our purpose, the existence of such an isomorphism will suffice.
\end{rmk}

\subsection{Steenbrink's vanishing theorem}\label{rmk_Steenbrink}
Since the de Rham complex of any filtered $\cD_X$-module is supported in nonpositive degrees, it follows from
(\ref{eq_DuBois2}) that 
$$\cH^q(\underline{\Omega}_Z^p)=0 \,\,\,\,\,\,{\rm  for~all} \,\,\,\,\,\, q\geq n-p.$$ 
(This is a special case of a
vanishing result that holds for arbitrary varieties $Z$; see \cite[Theorem~7.29]{PetersSteenbrink}.)
This in turn implies via the exact triangle (\ref{eq_exact_trg}) the fact that
$$R^q \mu_*\Omega_Y^p({\rm log}\,E)(-E)=0\quad\text{for}\quad q>n-p,$$
the assertion of Steenbrink's vanishing theorem in our setting; see \cite[Theorem~2]{Steenbrink}.

\section{Proof of the vanishing results}

We keep the notation from Section~\ref{section_notation}.
Before proving Theorem~\ref{thm1_intro} and related results, we make some preliminary considerations. 

We consider the pole order filtration $P_\bullet$ on $\cO_X(*Z)$ defined in Section \ref{scn:LHFME}. 
We also denote by $P_{\bullet}$ the induced filtration on 
$\cH^1_Z(\cO_X)=\cO_X(*Z)/\cO_X$. For every nonnegative integer 
$p$, with $p\leq n$, consider the complex
$$C_p^{\bullet}={\rm Gr}^P_{p-n}{\rm DR}_X\big(\cH_Z^1(\cO_X)\big).$$
In other words, $C_p^{\bullet}$ is the following complex, placed in cohomological degrees $-p,\ldots,0$:
\begin{equation}\label{eq_formula_C}
0\to \Omega_X^{n-p}\otimes_{\cO_X}\cO_Z(Z)\to \Omega_{X}^{n-p+1}\otimes_{\cO_X}\cO_Z(2Z)\to\cdots\to\omega_X\otimes_{\cO_X}\cO_Z\big((p+1)Z\big)
\to 0.
\end{equation}
If $U\subseteq X$ is an open subset where $Z\cap U$ is defined by $f\in\cO_X(U)$, then on $U$
the differential of the complex acts at $\Omega_X^{n-p+i}\otimes\cO_Z\big((i+1)Z)$ as 
\begin{equation}\label{eq_descr_C}
\eta\otimes [1/f^{i+1}] \,\,\,\, \mapsto \,\,\,\, -(i+1)(\eta\wedge df)\otimes [1/f^{i+2}].
\end{equation}
It will be convenient to also consider $C_{-1}^{\bullet}=0$. 

Since the Hodge filtration $F_{\bullet}$ on $\cO_X(*Z)$ satisfies $F_k\cO_X(*Z)\subseteq P_k \cO_X(*Z)$ for all $k$, we have a canonical morphism
\begin{equation}\label{phi}
\varphi_p\colon {\rm Gr}^F_{p-n}{\rm DR}_X\big(\cH^1_Z(\cO_X))\to C_p^{\bullet}.
\end{equation}

\smallskip

\begin{lem}\label{lem_translation1}
For every $p\geq 0$, if $\widetilde{\alpha}(Z)\geq p$, then the morphism of complexes $\varphi_p$ is injective and ${\rm Coker}(\varphi_p)$ is supported in
cohomological degree $0$. Moreover, we have ${\rm Coker}(\varphi_p)=0$ if and only if $\widetilde{\alpha}(Z)\geq p+1$. 
\end{lem}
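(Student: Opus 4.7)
The plan is to compare the two filtrations term-by-term on the de Rham complex. Write out both complexes explicitly: in cohomological degree $-p+j$ (for $0\le j\le p$), the complex ${\rm Gr}^F_{p-n}{\rm DR}_X\big(\cH^1_Z(\cO_X)\big)$ has the term $\Omega_X^{n-p+j}\otimes_{\cO_X}{\rm Gr}^F_j \cH^1_Z(\cO_X)$, while $C_p^{\bullet}={\rm Gr}^P_{p-n}{\rm DR}_X\big(\cH^1_Z(\cO_X)\big)$ has $\Omega_X^{n-p+j}\otimes_{\cO_X}{\rm Gr}^P_j\cH^1_Z(\cO_X)\simeq \Omega_X^{n-p+j}\otimes_{\cO_X}\cO_Z\bigl((j+1)Z\bigr)$, matching \eqref{eq_formula_C}. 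Under these identifications, $\varphi_p$ is the map induced degree-by-degree by the canonical inclusions $F_j\hookrightarrow P_j$ of submodules of $\cH^1_Z(\cO_X)$, passed to the quotient by $F_{j-1}\hookrightarrow P_{j-1}$.

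Next I would invoke the characterization of the minimal exponent recalled in Section~\ref{scn:LHFME}: the condition $\widetilde{\alpha}(Z)\ge p$ is equivalent to $F_k\cO_X(*Z)=P_k\cO_X(*Z)$ for every $k\le p-1$, which passes to the quotient $\cH^1_Z(\cO_X)$ (since $\cO_X$ contributes only in degree $0$ of the Hodge filtration, and for $k\ge 0$ we have $F_k\cH^1_Z(\cO_X)=F_k\cO_X(*Z)/\cO_X$ and similarly for $P_k$). Consequently, for $0\le j\le p-1$, both the $F$-graded piece and the $P$-graded piece in degree $-p+j$ are identified, so $\varphi_p$ is an isomorphism there.

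It remains to analyze degree $j=p$, i.e.\ cohomological degree $0$. Here $F_{p-1}=P_{p-1}$ by the previous step, while $F_p\subseteq P_p$ with possibly strict inclusion. Thus the map on graded pieces is
\[
F_p/F_{p-1}\;=\;F_p/P_{p-1}\;\lhook\joinrel\longrightarrow\; P_p/P_{p-1},
\]
which is an injection with cokernel $P_p/F_p$ (tensored with $\omega_X$). This yields both that $\varphi_p$ is injective as a morphism of complexes and that ${\rm Coker}(\varphi_p)$ is concentrated in cohomological degree $0$, given by $\omega_X\otimes_{\cO_X}\bigl(P_p\cO_X(*Z)/F_p\cO_X(*Z)\bigr)$.

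Finally, ${\rm Coker}(\varphi_p)=0$ amounts to $F_p\cO_X(*Z)=P_p\cO_X(*Z)$; combined with the equalities in lower degrees already provided by $\widetilde{\alpha}(Z)\ge p$, this is the condition $F_k\cO_X(*Z)=P_k\cO_X(*Z)$ for all $k\le p$, equivalent to $\widetilde{\alpha}(Z)\ge p+1$ by the cited result of Saito. The proof is essentially a bookkeeping exercise once the two filtrations are matched up on the de Rham complex; I do not anticipate a genuine obstacle, only the need to be careful that the canonical identifications ${\rm Gr}^P_j\cH^1_Z(\cO_X)\simeq \cO_Z\bigl((j+1)Z\bigr)$ and the passage from $\cO_X(*Z)$ to $\cH^1_Z(\cO_X)$ are compatible with both filtrations in the relevant range $j\ge 0$.
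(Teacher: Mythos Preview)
Your argument is correct and follows essentially the same approach as the paper: both proofs use the characterization $\widetilde{\alpha}(Z)\ge p \iff F_k=P_k$ for $k\le p-1$ to see that $\varphi_p$ is an isomorphism in every nonzero cohomological degree and an injection in degree $0$, with cokernel vanishing precisely when $F_p=P_p$, i.e.\ when $\widetilde{\alpha}(Z)\ge p+1$. Your write-up is simply more explicit about the term-by-term identifications.
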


\begin{proof}
As explained in Section \ref{scn:LHFME}, we have $\widetilde{\alpha}(Z)\geq p$ if and only if 
$F_k\cO_X(*Z)=P_k\cO_X(*Z)$ for all $k\leq p-1$, or equivalently, $F_k\cH^1_Z(\cO_X)=P_k\cH_Z^1(\cO_X)$ for all $k\leq p-1$. 
We thus see that if  we denote by $\varphi_p^i$ the component of $\varphi_p$ in cohomological degree $i$, then 
the hypothesis implies that  $\varphi_p^i$ is an isomorphism for all $i\neq 0$ and $\varphi_p^0$ is injective.
Moreover, ${\rm Coker}(\varphi_p^0)=0$ if and only if $\widetilde{\alpha}(Z)\geq p+1$. 
\end{proof}

\begin{cor}\label{cor_lem_translation1}
The following assertions are equivalent:
\begin{enumerate}
\item[i)] $\widetilde{\alpha}(Z)\geq p+1$.
\item[ii)] $\varphi_k$ is an isomorphism of complexes for all $0\leq k\leq p$.
\item[iii)] $\varphi_k$ is a quasi-isomorphism for all $0\leq k\leq p$. 
\end{enumerate}
\end{cor}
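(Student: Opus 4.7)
The plan is to deduce all three equivalences directly from Lemma~\ref{lem_translation1}. For (i) $\Rightarrow$ (ii), I would simply apply Lemma~\ref{lem_translation1} once for each $0 \le k \le p$: since $\widetilde{\alpha}(Z) \ge p+1 \ge k+1 \ge k$, the lemma yields that $\varphi_k$ is injective with cokernel supported in cohomological degree $0$, and this cokernel vanishes because $\widetilde{\alpha}(Z) \ge k+1$. Consequently $\varphi_k$ is an isomorphism of complexes. The implication (ii) $\Rightarrow$ (iii) is automatic, since an isomorphism of complexes is in particular a quasi-isomorphism.

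For (iii) $\Rightarrow$ (i), I would argue by induction on $k$, proving that if $\varphi_0, \varphi_1, \ldots, \varphi_k$ are all quasi-isomorphisms, then $\widetilde{\alpha}(Z) \ge k+1$. For the base case $k = 0$, $\widetilde{\alpha}(Z) \ge 0$ holds automatically (indeed $\widetilde{\alpha}(Z) \ge \lct(Z) > 0$), so Lemma~\ref{lem_translation1} with $p = 0$ applies: $\varphi_0$ is injective with cokernel concentrated in cohomological degree $0$. The short exact sequence of complexes $0 \to {\rm Gr}^F_{-n}{\rm DR}_X(\cH^1_Z(\cO_X)) \to C_0^{\bullet} \to {\rm Coker}(\varphi_0) \to 0$ then induces a long exact sequence in cohomology, and the hypothesis that $\varphi_0$ is a quasi-isomorphism forces the cokernel to be acyclic; being concentrated in a single degree, it must in fact be zero. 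By the second part of Lemma~\ref{lem_translation1} this yields $\widetilde{\alpha}(Z) \ge 1$. The inductive step is identical: assuming $\widetilde{\alpha}(Z) \ge k$, Lemma~\ref{lem_translation1} applied with $p = k$ gives an injection with cokernel in cohomological degree $0$, which once more must vanish when $\varphi_k$ is a quasi-isomorphism, yielding $\widetilde{\alpha}(Z) \ge k+1$.

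No serious obstacle is expected, as the whole corollary is formal given Lemma~\ref{lem_translation1} together with the elementary fact that an acyclic complex concentrated in a single cohomological degree must vanish. The one subtlety worth emphasizing is that Lemma~\ref{lem_translation1} requires the preliminary bound $\widetilde{\alpha}(Z) \ge p$ before one can say anything about $\varphi_p$; this is precisely why the proof of (iii) $\Rightarrow$ (i) must bootstrap the bound on $\widetilde{\alpha}(Z)$ one integer at a time, rather than attempting a single application of the lemma at $k = p$.
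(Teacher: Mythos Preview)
Your proposal is correct and follows essentially the same approach as the paper: both derive i)$\Rightarrow$ii) and ii)$\Rightarrow$iii) directly from Lemma~\ref{lem_translation1}, and both prove iii)$\Rightarrow$i) by an induction that bootstraps the bound on $\widetilde{\alpha}(Z)$ one step at a time, using that an injective morphism of complexes with cokernel concentrated in a single degree is a quasi-isomorphism only if that cokernel vanishes. The only cosmetic difference is that the paper starts the induction at $p=-1$ (vacuous base case), whereas you start at $k=0$ using the automatic bound $\widetilde{\alpha}(Z)>0$.
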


\begin{proof}
The implication i)$\Rightarrow$ii) follows from the lemma, while the implication ii)$\Rightarrow$iii) is trivial. 
The implication iii)$\Rightarrow$i) follows by induction on $p$, with the case $p=-1$ being trivial. 
The induction step follows from the lemma and the fact that an injective morphism of complexes is a quasi-isomorphism
if and only if its cokernel is acyclic. 
\end{proof}

We will use the following consequence:

\begin{prop}\label{prop_translation}
For every $p\ge 0$, if $\widetilde{\alpha}(Z)\geq p+1$, then we have an isomorphism
$$ \R \cH om_{\cO_X}\big(C_p^{\bullet},\omega_X[n]\big)\simeq \underline{\Omega}_Z^p[n-p-1].$$
\end{prop}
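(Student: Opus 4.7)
The plan is essentially to combine Lemma~\ref{eq_DuBois} with Corollary~\ref{cor_lem_translation1}; no new content is needed. The proof is almost entirely formal manipulation of shifts and of the functor $\R\cH om_{\cO_X}(-,\omega_X[n])$.

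First I would rewrite the right-hand side using Lemma~\ref{eq_DuBois}. That lemma gives
$$\underline{\Omega}_Z^p \simeq \R\cH om_{\cO_X}\bigl(\mathrm{Gr}^F_{p-n}\mathrm{DR}_X(\cH^1_Z(\cO_X)),\omega_X\bigr)[p+1].$$
Shifting both sides by $[n-p-1]$ and using that shifts can be absorbed into the second argument of $\R\cH om$ yields
$$\underline{\Omega}_Z^p[n-p-1] \simeq \R\cH om_{\cO_X}\bigl(\mathrm{Gr}^F_{p-n}\mathrm{DR}_X(\cH^1_Z(\cO_X)),\omega_X[n]\bigr).$$

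Next I would apply Corollary~\ref{cor_lem_translation1}. The hypothesis $\widetilde{\alpha}(Z)\geq p+1$ is precisely condition (i) of that corollary, hence implication (i)$\Rightarrow$(iii) gives that the comparison map
$$\varphi_p\colon \mathrm{Gr}^F_{p-n}\mathrm{DR}_X(\cH^1_Z(\cO_X)) \longrightarrow C_p^{\bullet}$$
is a quasi-isomorphism of complexes of $\cO_X$-modules. Applying the contravariant functor $\R\cH om_{\cO_X}(-,\omega_X[n])$ to this quasi-isomorphism and combining with the isomorphism of the previous paragraph yields
$$\R\cH om_{\cO_X}\bigl(C_p^{\bullet},\omega_X[n]\bigr) \simeq \R\cH om_{\cO_X}\bigl(\mathrm{Gr}^F_{p-n}\mathrm{DR}_X(\cH^1_Z(\cO_X)),\omega_X[n]\bigr) \simeq \underline{\Omega}_Z^p[n-p-1],$$
which is the desired isomorphism.

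Because every ingredient has already been assembled, there is really no serious obstacle; the only points requiring a little care are the bookkeeping with shifts (one must check the shift $[n-p-1]$ matches what comes out of Lemma~\ref{eq_DuBois}) and the observation that $\R\cH om_{\cO_X}(-,\omega_X[n])$ preserves quasi-isomorphisms, which is automatic since it is the derived functor of a left-exact functor. No appeal to the explicit formula~\eqref{eq_formula_C} for $C_p^{\bullet}$ is needed at this stage; the explicit description only matters later when one analyzes cohomology sheaves of the resulting complex.
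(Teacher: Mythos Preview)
Your proof is correct and follows essentially the same approach as the paper: the paper's argument likewise combines Corollary~\ref{cor_lem_translation1} (to identify $\varphi_p$ as an isomorphism) with Lemma~\ref{eq_DuBois}, then applies $\R\cH om_{\cO_X}(-,\omega_X[n])$. The only cosmetic difference is that the paper invokes implication (i)$\Rightarrow$(ii) (isomorphism of complexes) rather than (i)$\Rightarrow$(iii), but this makes no difference in the argument.
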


\begin{proof}
It follows from Corollary~\ref{cor_lem_translation1} that $\varphi_p$ is an isomorphism. Applying $\R \cH om_{\cO_X}\big(-,\omega_X[n]\big)$ and using the fact that 
by Lemma~\ref{eq_DuBois} we have an isomorphism
$$\R \cH om_{\cO_X}\big({\rm Gr}^F_{p-n}{\rm DR}_X(\cH_Z^1(\cO_X)),\omega_X[n]\big)\simeq \underline{\Omega}_Z^k[n-p-1],$$
we obtain the assertion in the proposition.
\end{proof}

For future reference, we recall that for every locally free sheaf $\cE$ on $X$, we have
\begin{equation}\label{eq_Ext}
{\mathcal Ext}_{\cO_X}^1(\cE\vert_Z,\omega_X)\simeq\cE^{\vee}\otimes_{\cO_X}\omega_Z \,\,\,\,\,\,
{\rm and} \,\,\,\,\,\, {\mathcal Ext}_{\cO_X}^j(\cE\vert_Z,\omega_X)=0 \,\,\,\,{\rm for} \,\,\,\,j \neq 1.
\end{equation}
We will need one more result about the complexes $C_p^{\bullet}$. 

\begin{lem}\label{vanishing_for_C}
For every $p\geq 0$, we have 
\begin{equation}\label{eq_pf_thm1}
{\mathcal Ext}_{\cO_X}^j(C_p^{\bullet},\omega_X)=0\quad\text{for all}\quad j> p+1.
\end{equation}
If, in addition, we assume that $\widetilde{\alpha}(Z)\geq p+1$, then we also have
\begin{equation}\label{eq5_pf_thm1}
{\mathcal Ext}_{\cO_X}^j(C_p^{\bullet},\omega_X)=0\quad\text{for all}\quad j<p+1
\end{equation}
and an exact sequence
\begin{equation}\label{eq2_pf_thm1}
0\to {\mathcal Ext}_{\cO_X}^{p}(C_{p-1}^{\bullet},\omega_X)\otimes\cO_X(-Z)\to \Omega_X^p\vert_Z\to {\mathcal Ext}_{\cO_X}^{p+1}(C_p^{\bullet},\omega_X)\to 0.
\end{equation}
\end{lem}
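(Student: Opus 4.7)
The plan is to prove the three assertions in turn: the first vanishing via a spectral sequence, the second via Proposition~\ref{prop_translation}, and the exact sequence via an explicit short exact sequence of complexes relating $C_{p-1}^\bullet$ to $C_p^\bullet$.

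For the first vanishing, each term $C_p^r=\Omega_X^{n+r}\otimes_{\cO_X}\cO_Z((r+p+1)Z)$ has the form $\cE|_Z$ for a locally free sheaf $\cE$ on $X$, so by \eqref{eq_Ext} it satisfies ${\mathcal Ext}_{\cO_X}^j(C_p^r,\omega_X)=0$ for $j\neq 1$. The spectral sequence
$$E_1^{a,b}={\mathcal Ext}_{\cO_X}^b(C_p^{-a},\omega_X)\Rightarrow {\mathcal Ext}_{\cO_X}^{a+b}(C_p^\bullet,\omega_X)$$
coming from the stupid filtration of $C_p^\bullet$ therefore has only one nontrivial row ($b=1$), with columns $a\in\{0,\ldots,p\}$, forcing ${\mathcal Ext}_{\cO_X}^k(C_p^\bullet,\omega_X)=0$ outside $\{1,\ldots,p+1\}$. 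For the second vanishing, under the hypothesis $\widetilde{\alpha}(Z)\geq p+1$, Proposition~\ref{prop_translation} gives an isomorphism $\R\cH om_{\cO_X}(C_p^\bullet,\omega_X)\simeq \underline{\Omega}_Z^p[-p-1]$; since the Du Bois complex $\underline{\Omega}^\bullet_Z$ is by its construction via cubical hyperresolution supported in nonnegative cohomological degrees, so is its shifted graded piece $\underline{\Omega}_Z^p$, and the shift by $[-p-1]$ then places the cohomology of the right-hand side in degrees $\geq p+1$, yielding \eqref{eq5_pf_thm1}.

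For \eqref{eq2_pf_thm1} we will build a short exact sequence of complexes
$$0\to C_{p-1}^\bullet\otimes\cO_X(Z)\to C_p^\bullet\to \big(\Omega_X^{n-p}\otimes\cO_Z(Z)\big)[p]\to 0.$$
The sheaves $C_{p-1}^r\otimes\cO_X(Z)$ and $C_p^r$ coincide as $\cO_X$-modules for $r\in\{-p+1,\ldots,0\}$; by \eqref{eq_descr_C} the two differentials differ only by a nonzero scalar factor at each step, and after rescaling each component of $C_{p-1}^\bullet\otimes\cO_X(Z)$ by an appropriate constant one obtains a chain-level inclusion into $C_p^\bullet$ whose cokernel is the one-term complex $\Omega_X^{n-p}\otimes\cO_Z(Z)$ in degree $-p$. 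Applying $\R\cH om_{\cO_X}(-,\omega_X)$ and using \eqref{eq_Ext} to identify the dual of this cokernel as $\Omega_X^p|_Z[-p-1]$ produces a distinguished triangle. The associated long exact sequence in ${\mathcal Ext}$, combined with \eqref{eq_pf_thm1} and \eqref{eq5_pf_thm1} applied to both $C_p^\bullet$ and $C_{p-1}^\bullet$ (legal since $\widetilde{\alpha}(Z)\geq p+1\geq p$), collapses around degrees $p$ and $p+1$ to exactly \eqref{eq2_pf_thm1}.

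The delicate step is the second vanishing, which relies on the (standard but implicit) fact that $\underline{\Omega}_Z^p$ lives in nonnegative cohomological degrees. Everything else is bookkeeping: a one-row spectral-sequence collapse for \eqref{eq_pf_thm1}, and a rescaling-plus-long-exact-sequence argument for \eqref{eq2_pf_thm1}.
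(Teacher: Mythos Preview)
Your proof is correct and follows essentially the same architecture as the paper's: the same short exact sequence of complexes
\[
0\longrightarrow C_{p-1}^{\bullet}\otimes\cO_X(Z)\longrightarrow C_p^{\bullet}\longrightarrow \Omega_X^{n-p}\otimes\cO_Z(Z)[p]\longrightarrow 0
\]
drives \eqref{eq2_pf_thm1}, and \eqref{eq5_pf_thm1} is obtained identically via Proposition~\ref{prop_translation} and the nonnegativity of $\underline{\Omega}_Z^p$. The one genuine difference is in \eqref{eq_pf_thm1}: the paper proves it by induction on $p$ using the above short exact sequence, whereas you use the hypercohomology spectral sequence directly, observing that each term of $C_p^\bullet$ has $\cExt$ concentrated in degree $1$ so the $E_1$-page has a single row. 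Your route is slightly more economical here (no induction needed, and it simultaneously gives vanishing for $j<1$), while the paper's inductive approach has the virtue of reusing the same short exact sequence that is needed anyway for \eqref{eq2_pf_thm1}. Your remark about the rescaling needed to identify $C_{p-1}^\bullet\otimes\cO_X(Z)$ with $\sigma^{\geq -p+1}C_p^\bullet$ is a detail the paper leaves implicit.
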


\begin{proof}
It follows from the description of the complex 
$C_p^{\bullet}$  and its differential in (\ref{eq_formula_C}) and (\ref{eq_descr_C}) 
that $C_{p-1}^{\bullet}\otimes_{\cO_X}\cO_X(Z)$ 
is isomorphic to the ``stupid" truncation 
$\sigma^{\geq -p+1}(C_p^{\bullet})$.
 Therefore we have a short exact sequence of complexes
\begin{equation}\label{eq1_lem_van1}
0\longrightarrow C_{p-1}^{\bullet}\otimes\cO_X(Z)\longrightarrow C_{p}^{\bullet}\longrightarrow \Omega_X^{n-p}\otimes\cO_Z(Z)[p]\longrightarrow 0.
\end{equation}
Using the vanishings in (\ref{eq_Ext}), we deduce from (\ref{eq1_lem_van1}) that for $j>p+1$, we have an exact sequence
$$
0={\mathcal Ext}_{\cO_X}^{j-p}\big(\Omega_X^{n-p}\otimes\cO_Z(Z),\omega_X\big)\to {\mathcal Ext}^j(C_p^{\bullet},\omega_X)\to 
{\mathcal Ext}^j_{\cO_X}(C_{p-1}^{\bullet},\omega_X)\otimes\cO_X(-Z).
$$
Using this, we obtain the vanishing in (\ref{eq_pf_thm1}) by induction on $p\geq -1$, the case $p=-1$ being trivial.

Next, suppose that $\widetilde{\alpha}(Z)\geq p+1$. Using Proposition~\ref{prop_translation}, we deduce that for $j<p+1$ we have
$${\mathcal Ext}^j(C_p^{\bullet},\omega_X)\simeq \cH^{j-p-1}(\underline{\Omega}_Z^p)=0,$$
where the vanishing follows from the fact that $\underline{\Omega}_Z^p$ has no nonzero cohomology sheaves in negative degrees; see \cite[Proposition~7.30b]{PetersSteenbrink}, or simply use the exact triangle (\ref{eq_exact_trg}).
This shows ($\ref{eq5_pf_thm1}$). Moreover, using this vanishing, from (\ref{eq1_lem_van1}) we get the short exact sequence
$$0\to {\mathcal Ext}_{\cO_X}^{p}(C_{p-1}^{\bullet},\omega_X)\otimes\cO_X(-Z) \to {\mathcal Ext}^1_{\cO_X}\big(\Omega_X^{n-p}\otimes\cO_Z(Z),\omega_X\big)\to {\mathcal Ext}_{\cO_X}^{p+1}(C_p^{\bullet},\omega_X)\to 0.$$
This shows ($\ref{eq2_pf_thm1}$) once we note that the isomorphism in (\ref{eq_Ext}) gives 
$${\mathcal Ext}^1_{\cO_X}\big(\Omega_X^{n-p}\otimes\cO_Z(Z),\omega_X\big)
\simeq (\Omega_X^{n-p})^{\vee}\otimes\cO_Z(-Z)\otimes\omega_Z\simeq\Omega_X^p\vert_Z.$$
This completes the proof of the lemma.
\end{proof}

We can now prove the first result stated in the Introduction. 

\begin{proof}[Proof of Theorem~\ref{thm1_intro}]
If $Z$ is smooth the statement is trivial, hence from now on we assume that $Z$ is singular.
We fix a nonnegative integer $p$ such that $\widetilde{\alpha}(Z)\geq p+1$. The fact that the canonical morphism
$\Omega_Z^p\to \underline{\Omega}_Z^p$ is an isomorphism is equivalent to having $\cH^i(\underline{\Omega}^p_Z)=0$ for $i\neq 0$ and the canonical morphism
\begin{equation}\label{eq_morphism_H0}
\Omega_Z^p\to \cH^0(\underline{\Omega}_Z^p)
\end{equation}
being an isomorphism. By combining Proposition~\ref{prop_translation} and Lemma~\ref{vanishing_for_C}, we see that since $\widetilde{\alpha}(Z)\geq p+1$, 
we have
$$\cH^i(\underline{\Omega}^p_Z)\simeq {\mathcal Ext}_{\cO_X}^{p+1+i}(C_p^{\bullet},\omega_X)=0\quad\text{for}\quad i\neq 0.$$
Therefore we are left with showing that (\ref{eq_morphism_H0}) is an isomorphism. 

Note that when $p=0$ the assertion in the theorem follows from \cite[Theorem~0.5]{Saito09}.
We thus may and will assume that $p\geq 1$. 
In this case, since $\widetilde{\alpha}(Z)\geq 2$, we deduce from \cite[Theorem~0.4]{Saito93} that $Z$ has rational singularities, hence it is normal. 

Consequently, since
the restriction of (\ref{eq_morphism_H0}) to the smooth locus of $X$ is an isomorphism, it is enough to prove that both sheaves $\Omega_Z^p$ and $\cH^0(\underline{\Omega}_Z^p)$
satisfy Serre's property $S_2$: this implies that if $j\colon U=Z\smallsetminus Z_{\rm sing} \hookrightarrow Z$ is the inclusion of the smooth locus of $Z$, then (\ref{eq_morphism_H0}) gets identified
with 
$$j_*(\Omega_Z^p\vert_U)\to j_*\big(\cH^0(\underline{\Omega}_Z^p)\vert_U\big).$$
In fact, we will prove the following stronger fact: for every point $x\in Z_{\rm sing}$, not necessarily closed, we have
\begin{equation}\label{eq_formula_pd}
{\rm depth}(\Omega_{Z,x}^p)\geq \dim(\cO_{X,x})-p-1\quad\text{and}\quad {\rm depth}\big(\cH^0(\underline{\Omega}_Z^p)_x\big)\geq \dim(\cO_{X,x})-p-1.
\end{equation}
Note that since $Z$ is singular and $\widetilde{\alpha}(Z)\geq p+1$, we have ${\rm codim}_X(Z_{\rm sing})\geq 2p+2$ by Lemma~\ref{lem_singular_locus}.
For every $x\in Z_{\rm sing}$ as above, we thus have $\dim(\cO_{X,x})-p-1\geq p+1$. As the restrictions of $\Omega_Z^p$ and $\cH^0(\underline{\Omega}_Z^p)$
to $U$ are locally free, we thus deduce from (\ref{eq_formula_pd}) that if $Z$ is singular, then both $\Omega_Z^p$ and $\cH^0(\underline{\Omega}_Z^p)$
satisfy Serre's property $S_{p+1}$, hence also property $S_2$ (recall that $p\geq 1$).

We first treat $\cH^0(\underline{\Omega}_Z^p)$. Note that since $\widetilde{\alpha}(Z)\geq p+1$, it follows from Proposition~\ref{prop_translation} that
we have an isomorphism
\begin{equation}\label{eq_H0}
\cH^0(\underline{\Omega}_Z^p)\simeq {\mathcal Ext}_{\cO_X}^{p+1}(C_p^{\bullet},\omega_X).
\end{equation}
Moreover, for every $k$ with $0\leq k\leq p$, 
we can use the exact sequence in Lemma~\ref{vanishing_for_C} to deduce
$${\rm depth}\big({\mathcal Ext}_{\cO_X}^{k+1}(C_k^{\bullet},\omega_X)_x\big)\geq \min\big\{{\rm depth}((\Omega_X^k\vert_Z)_x), {\rm depth}\big({\mathcal Ext}_{\cO_X}^{k}(C_{k-1}^{\bullet},\omega_X)_x\big)-1\big\}$$
$$=\min\big\{\dim(\cO_{X,x})-1, {\rm depth}\big({\mathcal Ext}_{\cO_X}^{k}(C_{k-1}^{\bullet},\omega_X)_x\big)-1\big\},$$
where the inequality follows for example from \cite[Proposition~1.2.9]{BH} and the equality follows from the fact that $\Omega_X^k\vert_Z$ is a locally free $\cO_Z$-module.
Using induction on $k$, with $0\leq k\leq p$, and the fact that $C_{-1}^{\bullet}=0$, we conclude that
$${\rm depth}\big({\mathcal Ext}_{\cO_X}^{k+1}(C_k^{\bullet},\omega_X)_x\big)\geq \dim(\cO_{X,x})-k-1.$$
In particular, due to the isomorphism (\ref{eq_H0}), for $k=p$ we get the second inequality in (\ref{eq_formula_pd}) . 

In order to prove the first inequality in (\ref{eq_formula_pd}), we may argue locally, and thus assume that 
$Z$ is defined in $X$ by some $f\in\cO_X(X)$. In this case we have the presentation
$$\cO_Z\overset{df}\longrightarrow \Omega_X^1\vert_Z\longrightarrow \Omega_Z^1\longrightarrow 0$$
and the zero-locus of $df$ is $Z_{\rm sing}$, whose codimension in $Z$ is $\geq 2p+1\geq p+2$. 
Since $Z$ is Cohen-Macaulay and we have in particular ${\rm codim}_Z(Z_{\rm sing})\geq p$, 
it follows from the description of depth via Koszul homology, see \cite[Theorem~16.8]{Matsumura}, 
that the following complex
$$0\longrightarrow\cO_Z\overset{df}\longrightarrow \Omega_X^1\vert_Z\overset{-\wedge df}\longrightarrow \cdots\overset{-\wedge df}\longrightarrow
\Omega_X^p\vert_Z\longrightarrow \Omega_Z^p\to 0$$
is exact (note that exactness at the last two terms holds in general).
Breaking this into short exact sequences, localizing at $x$,  and using again \cite[Proposition~1.2.9]{BH} and the fact that the sheaves $\Omega_X^k\vert_Z$ are 
locally free sheaves of $\cO_Z$-modules, proceeding as in the previous paragraph we obtain the first inequality in (\ref{eq_formula_pd}).
\end{proof}

\begin{rmk}\label{rmk_reflexive}
We note that in the proof of Theorem~\ref{thm1_intro} we have shown that if $Z$ is a singular hypersurface such that $\widetilde{\alpha}(Z)\geq p+1$, for some $p\geq 1$, then
$\Omega_Z^p$ satisfies Serre's condition $S_{p+1}$, and thus the condition $S_2$. (If $Z$ is smooth, then of course all sheaves $\Omega_Z^k$ are Cohen-Macaulay). In particular, since $Z$ is normal,
it follows that
$\Omega_Z^p$ is a reflexive sheaf; see \cite[Proposition~1.4.1]{BH}.
\end{rmk}

\begin{rmk}\label{rational-H0}
Under the assumptions of Theorem~\ref{thm1_intro}, if $p\geq 1$, then one can in fact describe the sheaves 
$\cH^0(\underline{\Omega}_Z^q)$ concretely, as the reflexive hull of $\Omega_Z^q$, for \emph{all} $q$ with $0\leq q\leq n$. First, with no assumptions on $Z$, they can be identified with the sheaves of $h$-differentials, namely 
$$\cH^0(\underline{\Omega}_Z^q)\simeq {\Omega^q_{\rm h}}|_{Z}$$
for each $q$; this follows from \cite[Theorem~7.12]{Huber} (see also the notation after Remark 6.13 in \emph{loc.}\,\emph{cit.}). On the other hand, as noted in Section \ref{scn:LHFME} (the second of the numerical criteria for minimal exponents), by 
\cite[Theorem~0.4]{Saito93} the condition 
$\widetilde{\alpha}(Z)\geq 2$ implies that $Z$ has rational singularities (hence in particular it is normal). Since $Z$ is a hypersurface, this is equivalent to $Z$ having klt singularities
by \cite[Corollary~11.13]{Kollar}. In this case, if $j\colon Z_{\rm sm}\hookrightarrow Z$ is the inclusion of the smooth locus, then 
$$ {\Omega^q_{\rm h}}|_{Z} \simeq j_*\Omega_{Z_{\rm sm}}^q \simeq (\Omega_Z^q)^{\vee \vee}$$ 
for all $q$, by \cite[Theorem~5.4]{Huber}. More recently, this was shown to hold for all varieties with rational singularities by Kebekus-Schnell \cite[Corollary~1.11]{KeS}.
\end{rmk}

While the statement and argument for the vanishing in Theorem \ref{thm1_intro} are particularly transparent, a stronger statement can be made about the vanishing of individual cohomologies, in terms of the size of the loci in $Z$ where the minimal exponent is small.

\begin{thm}\label{stronger-vanishing}
Let $p$ be a nonnegative integer. If the locus $Z_p$ of points $x\in Z$ with $\widetilde{\alpha}_x(Z)<p+1$ ${\rm (}$equivalently, the closed subset defined by the Hodge ideal 
$I_p (Z)$${\rm )}$ satisfies ${\rm codim}_X Z_p> i+p+2$ for some $i\geq 1$, then $\cH^i(\underline{\Omega}_Z^p)=0$. 
\end{thm}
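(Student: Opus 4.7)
My plan is to refine the argument for Theorem~\ref{thm1_intro}, exploiting the fact that under the weaker hypothesis the failure of $\varphi_p$ to be a quasi-isomorphism is localized on $Z_p$. Recall from Section~\ref{scn:LHFME} that $\widetilde{\alpha}_x(Z)\ge p+1$ is equivalent to the stalkwise equality $F_k\cH^1_Z(\cO_X)_x=P_k\cH^1_Z(\cO_X)_x$ for all $k\le p$; this forces ${\rm Gr}^F_k\to{\rm Gr}^P_k$ to be an isomorphism at $x$ for each such $k$. Consequently $\varphi_p$ is an isomorphism of complexes on $X\smallsetminus Z_p$, so setting $K:={\rm cone}(\varphi_p)$, every cohomology sheaf $\cH^j(K)$ is a coherent sheaf supported on $Z_p$. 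Moreover $\cH^j(K)=0$ unless $j\in[-p,0]$: both source and target of $\varphi_p$ live in cohomological degrees $[-p,0]$, and the degree-$(-p)$ component of $\varphi_p$ is the identity (since $F_0=P_0$ always), which rules out a potential contribution in degree $-p-1$.

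By Lemma~\ref{eq_DuBois}, $\cH^i(\underline{\Omega}_Z^p)\simeq{\mathcal Ext}^{p+1+i}_{\cO_X}\big({\rm Gr}^F_{p-n}{\rm DR}_X(\cH^1_Z(\cO_X)),\omega_X\big)$. Applying $\R\cH om_{\cO_X}(-,\omega_X)$ to the triangle
$${\rm Gr}^F_{p-n}{\rm DR}_X(\cH^1_Z(\cO_X))\longrightarrow C_p^{\bullet}\longrightarrow K\overset{+1}{\longrightarrow}$$
and passing to the associated long exact sequence of ${\mathcal Ext}$'s yields
$${\mathcal Ext}^{p+1+i}_{\cO_X}(C_p^{\bullet},\omega_X)\longrightarrow\cH^i(\underline{\Omega}_Z^p)\longrightarrow{\mathcal Ext}^{p+2+i}_{\cO_X}(K,\omega_X).$$
For $i\ge 1$, the leftmost term vanishes by Lemma~\ref{vanishing_for_C}, so the problem is reduced to showing ${\mathcal Ext}^{p+2+i}_{\cO_X}(K,\omega_X)=0$.

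For this last step I would invoke the hypercohomology spectral sequence
$$E_2^{a,b}={\mathcal Ext}^a_{\cO_X}\big(\cH^{-b}(K),\omega_X\big)\ \Longrightarrow\ {\mathcal Ext}^{a+b}_{\cO_X}(K,\omega_X).$$
Only pairs $(a,b)$ with $b\in[0,p]$ and $a+b=p+2+i$ contribute, forcing $a\in[i+2,i+p+2]$. Since each $\cH^{-b}(K)$ is supported on $Z_p$, the standard codimension vanishing ${\mathcal Ext}^a_{\cO_X}(G,\omega_X)=0$ for a coherent sheaf $G$ with $\codim_X{\rm Supp}(G)>a$ kills every such $E_2^{a,b}$ precisely when $\codim_X Z_p>i+p+2$, which is the hypothesis. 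The step that deserves the most care is this support analysis of the $\cH^j(K)$ together with the matching of indices in the spectral sequence (in particular, the bound $b\le p$ is what saves one factor of $p$ in the codimension requirement compared with the naive bound); everything else is a formal consequence of the machinery already developed for Theorem~\ref{thm1_intro} (Lemmas~\ref{eq_DuBois} and~\ref{vanishing_for_C}).
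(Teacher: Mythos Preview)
Your argument is correct and is essentially the paper's own proof: the paper likewise kills ${\mathcal Ext}^{i+p+1}_{\cO_X}(C_p^{\bullet},\omega_X)$ via Lemma~\ref{vanishing_for_C} and then controls the discrepancy between ${\rm Gr}^F$ and ${\rm Gr}^P$ by its support on $Z_p$ (splitting $\varphi_p$ into kernel and cokernel of a map of complexes and using the $E_1$ rather than $E_2$ spectral sequence, but the bookkeeping comes out the same). One small inaccuracy worth flagging: the claim ``$F_0=P_0$ always'' is false---that equality is equivalent to $(X,Z)$ being log canonical---but what you actually need, namely $\cH^{-p-1}(K)=0$, still holds because $\varphi_p^{-p}$ is always \emph{injective} (as $F_{-1}=P_{-1}=0$ and $F_0\subseteq P_0$); and even without this refinement, allowing a possible $b=p+1$ term supported on $Z_p$ would still give $a\le i+p+2$, so the hypothesis suffices regardless.
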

\begin{proof}
Note first that by Corollary~\ref{cor_lem_translation1} the morphism 
$$\varphi_p\colon {\rm Gr}_{p-n}{\rm DR}_X\big(\cH^1_Z(\cO_X), F\big)\to C_p^{\bullet}$$
as in ($\ref{phi}$) has the property that both $A^{\bullet}={\rm ker}(\varphi_p)$ and $B^{\bullet}={\rm coker}(\varphi_p)$ have all terms supported on $Z_p$. 
Moreover, these complexes are concentrated in cohomological degrees $\leq 0$. 
The first assertion in Lemma~\ref{vanishing_for_C} gives ${\mathcal Ext}^{i+p+1}_{\cO_X}(C_p^{\bullet},\omega_X)=0$. The short exact sequences
$$0={\mathcal Ext}_{\cO_X}^{i+p+1}(C_p^{\bullet},\omega_X)\to {\mathcal Ext}^{i+p+1}_{\cO_X}\big({\rm im}(\varphi_p),\omega_X\big)
\to {\mathcal Ext}^{i+p+2}_{\cO_X}(B^{\bullet},\omega_X)$$
and
$${\mathcal Ext}_{\cO_X}^{i+p+1}({\rm im}(\varphi_p),\omega_X)\to {\mathcal Ext}^{i+p+1}_{\cO_X}\big({\rm Gr}_{p-n}{\rm DR}_X(\cH_Z^1(\cO_X)),\omega_X\big)
\simeq \cH^i(\underline{\Omega}_Z^p)
\to {\mathcal Ext}^{i+p+1}_{\cO_X}(A^{\bullet},\omega_X),
$$
where the middle isomorphism in the latter follows from Lemma \ref{eq_DuBois}, imply that in order to conclude it is enough to show that
$${\mathcal Ext}_{\cO_X}^{i+p+1}(A^{\bullet},\omega_X)={\mathcal Ext}_{\cO_X}^{i+p+2}(B^{\bullet},\omega_X)=0.$$

It is thus suffices to check that if $\cF^{\bullet}$ is a complex on $X$ concentrated in degrees $\leq 0$, and
${\rm codim}_X{\rm Supp}(\cF^q) >m$ for all $q$, then ${\mathcal Ext}_{\cO_X}^m(\cF^{\bullet},\omega_X)=0$.
This follows from the hypercohomology spectral sequence
$$E_1^{i,j}={\mathcal Ext}_{\cO_X}^j(\cF^{-i},\omega_X)\Rightarrow {\mathcal Ext}_{\cO_X}^{i+j}(\cF^{\bullet},\omega_X),$$
since when $i+j=m$, we have $E_1^{i,j}=0$: indeed, we may assume that $i\geq 0$, hence $j=m-i\leq m$
and then ${\mathcal Ext}_{\cO_X}^j(\cF^{-i},\omega_X)=0$ as ${\rm codim}_X{\rm Supp}(\cF^{-i}) >m\geq j$.
\end{proof}

An immediate consequence is a range of automatic vanishing in terms of the size of the singular locus of $Z$.

\begin{cor}\label{cor:singular}
If the singular locus of the hypersurface $Z$ has dimension $s$, then for all $p \ge 0$ we have
$$\cH^i(\underline{\Omega}_Z^p)=0 \,\,\,\,\,\,{\rm for} \,\,\,\,\,\,1\leq i < n - s - p -2.$$
\end{cor}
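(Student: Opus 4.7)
The plan is to derive this as a direct consequence of Theorem~\ref{stronger-vanishing}, the main task being to control the codimension of the locus $Z_p$ where the minimal exponent drops below $p+1$.

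First, I would observe that $Z_p$ is automatically contained in $Z_{\rm sing}$: if $x\in Z$ is a smooth point, then in a neighborhood of $x$ the Bernstein-Sato polynomial is $b_Z(s) = s+1$, so $\widetilde{\alpha}_x(Z) = \infty$ and in particular $\widetilde{\alpha}_x(Z) \geq p+1$. Hence $Z_p \subseteq Z_{\rm sing}$, which gives $\dim Z_p \leq s$ (with the convention that $Z_p = \emptyset$ is handled trivially, as the hypothesis of Theorem~\ref{stronger-vanishing} is then vacuously satisfied).

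Next, assuming $Z_p \neq \emptyset$, we have
\[
\operatorname{codim}_X Z_p \;\geq\; n - s.
\]
The range of indices in the corollary is $1 \leq i < n - s - p - 2$, which is equivalent to $i + p + 2 < n - s$. Combining this with the codimension bound yields
\[
\operatorname{codim}_X Z_p \;\geq\; n - s \;>\; i + p + 2,
\]
so the hypothesis of Theorem~\ref{stronger-vanishing} is satisfied. Applying that theorem gives $\cH^i(\underline{\Omega}_Z^p) = 0$ in the claimed range, finishing the proof.

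Since the entire argument is a routine bookkeeping reduction, there is no serious obstacle here; the only point to be careful about is the trivial case $Z_p = \emptyset$ and confirming that smooth points of $Z$ contribute infinite minimal exponent, both of which are immediate.
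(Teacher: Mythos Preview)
Your proposal is correct and is exactly the intended argument: the paper records this as ``an immediate consequence'' of Theorem~\ref{stronger-vanishing}, and your bookkeeping with $Z_p\subseteq Z_{\rm sing}$ and $\operatorname{codim}_X Z_p\geq n-s>i+p+2$ is precisely the unpacking of that claim.
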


\begin{rmk}\label{non-DB}
The two results above are of course relevant even in the non-Du Bois case, or equivalently when we have $\widetilde{\alpha}_x(Z)< 1$ at some points, as Theorem \ref{stronger-vanishing} implies that $\cH^i(\underline{\Omega}_Z^0)=0 $ for $i < n - s -2$ if $s$ is the dimension of the non-Du Bois locus. For instance, if $Z$ has isolated singularities, then $\cH^i (\underline{\Omega}^0_Z) \neq 0$ can happen only for $i = 0$ and $i = n-2$, and it does happen for both if $Z$ is not Du Bois.\footnote{Note that $\cH^{n-1}(\underline{\Omega}_Z^0)=0$ by Theorem~\ref{thm3_intro}.}
\end{rmk}

\medskip

We next deduce the two corollaries of Theorem~\ref{thm1_intro}.

\begin{proof}[Proof of Corollary~\ref{cor1_intro}]
Since the morphism $\Omega_X^k\to\Omega_Z^k$ is surjective for every $k\geq 0$, the assertion follows directly from Theorem~\ref{thm1_intro} via the exact triangle (\ref{eq_exact_trg}). It is worth noting that Corollary~\ref{cor1_intro} is in fact equivalent to the vanishings $\cH^i (\underline{\Omega}_Z^p) = 0$ for $i >0$, plus the surjectivity of the natural map 
$\Omega^p_X \to \cH^0 (\underline{\Omega}_Z^p)$.
\end{proof}

\begin{proof}[Proof of Corollary~\ref{cor0_intro}]
The assertion follows directly from Theorem~\ref{thm1_intro} and the version of the Akizuki-Nakano vanishing theorem
for the graded pieces of the Du Bois complex:
$$H^i(Z,\underline{\Omega}_Z^{j}\otimes L)=0\quad\text{for}\quad i+j>\dim Z,$$
see \cite[Theorem V.5.1]{GNPP}.
\end{proof}

Using a similar approach to that in the proof of Theorem~\ref{thm1_intro}, we obtain the vanishing result in Theorem~\ref{thm3_intro}, as follows.

\begin{proof}[Proof of Theorem~\ref{thm3_intro}]
We may and will assume that $Z$ is singular, in which case our hypothesis implies $q\leq n/2$; in particular, we have $q\neq n$.
It is enough to prove that $\cH^{n-q-1}(\underline{\Omega}_Z^q)=0$: indeed, 
the vanishing of $R^{n-q} \mu_*\Omega_X^q({\rm log}\,E)(-E)$ then follows from
the exact triangle (\ref{eq_exact_trg}) since $q\neq n$ gives $\cH^{n-q}(\Omega_X^q)=0$. 

Furthermore, we may assume that $q\leq n-2$: if $q=n-1$, the fact that $q\leq n/2$ implies $n=2$. Moreover, the hypothesis that
$\widetilde{\alpha}(Z)\geq q=1$ gives that $(X,Z)$ is log canonical, and it is well known that this can only happen for nodal curves (note that in this case we
clearly have $\cH^{0}(\underline{\Omega}_Z^1)\neq 0$).

Using Lemma \ref{eq_DuBois}, we see that
$$\cH^{n-q-1}(\underline{\Omega}_Z^q)
\simeq {\mathcal Ext}^n_{\cO_X}\big({\rm Gr}_{q-n}^F{\rm DR}_X(\cH^1_Z(\cO_X)),\omega_X\big).$$

Note now that since $\widetilde{\alpha}(Z)\geq q$, it follows from Lemma~\ref{lem_translation1} that the morphism of complexes
$$\varphi_q\colon {\rm Gr}_{q-n}^F{\rm DR}_X(\cH^1_Z(\cO_X))\to C_q^{\bullet}$$
is injective and its cokernel is a sheaf $\cF$ (supported in cohomological degree $0$).
We thus have an exact sequence
$${\mathcal Ext}^n_{\cO_X}(C_q^{\bullet},\omega_X)\to {\mathcal Ext}^n_{\cO_X}\big({\rm Gr}_{q-n}^F{\rm DR}_X(\cH^1_Z(\cO_X)),\omega_X\big)
\to {\mathcal Ext}^{n+1}_{\cO_X}(\cF,\omega_X)=0,$$
hence it is enough to show that ${\mathcal Ext}^n_{\cO_X}(C_q^{\bullet},\omega_X)=0$. 

We use the short exact sequence of complexes ($\ref{eq1_lem_van1}$) as in the proof of Lemma~\ref{vanishing_for_C}:
$$0\to C_{q-1}^{\bullet}\otimes\cO_X(Z) \to C_q^{\bullet}\to \Omega_X^{n-q}\otimes\cO_Z(Z)[q]\to 0.$$
We deduce the existence of an exact sequence
$${\mathcal Ext}_{\cO_X}^{n-q}(\Omega_X^{n-q}\vert_Z,\omega_X)\otimes\cO_X(-Z)\to {\mathcal Ext}^n_{\cO_X}(C_q^{\bullet},\omega_X) \to 
{\mathcal Ext}_{\cO_X}^n(C^{\bullet}_{q-1},\omega_X)\otimes\cO_X(-Z).$$
The first term vanishes by (\ref{eq_Ext}), since $q\neq n-1$, and the third term vanishes by Lemma~\ref{vanishing_for_C},
since $q\neq n$. We thus have ${\mathcal Ext}^n_{\cO_X}(C_q^{\bullet},\omega_X)=0$,
completing the proof of the theorem.
\end{proof}

\section{Proof of the non-vanishing result}\label{last_section}

The proof of Theorem~\ref{thm4_intro} makes use of the $V$-filtration, so we begin with a very brief review of this notion. 
For more details, we refer for example to \cite[Section~3.1]{Saito-MHP} or \cite[Section~2]{MP2}. We keep the assumptions from Section~\ref{section_notation}, but we assume in addition that $Z$ is defined by $f\in\cO_X(X)$.

It is common to denote by $B_f$ the $\cD$-module push-forward $\iota_+\cO_X$, where $\iota\colon X\hookrightarrow W=X\times {\mathbf A}^1$ is the graph embedding
$\iota(x)=\big(x,f(x)\big)$. If $t$ denotes the coordinate on ${\mathbf A}^1$, then there is an isomorphism 
$$B_f \simeq \cO_X[t]_{f - t} / \cO_X[t] \simeq \bigoplus_{i\geq 0}\cO_X\cdot\partial_t^i\delta,$$
where $\delta$ denotes the class of $\frac{1}{f-t}$, and the actions of $t$ and of a derivation $P\in {\rm Der}_{{\mathbf C}}(\cO_X)$ are given by
$$t\cdot h\partial_t^i\delta=fh\partial_t^i\delta-ih\partial_t^{i-1}\delta\quad\text{and}\quad P\cdot h\partial_t^i\delta=
P(h)\partial_t^i\delta-P(f)h\partial_t^{i+1}\delta.$$
The $\cD_W$-module $B_f$ carries a (Hodge) filtration given by 
$$F_{p+1}B_f=\bigoplus_{0\leq i\leq p}\cO_X\cdot\partial_t^i\delta.$$
This filtered $\cD_X$-module underlies a pure Hodge module of weight $n$. 

When dealing with duality, it is more common to use right $\cD_X$-modules. In order to avoid confusion when citing various results,
we will follow this tradition. Recall that we have an equivalence of categories between left and right (filtered) $\cD$-modules; see Section~\ref{subsection_DR_duality}. 
For example, the right $\cD$-module
corresponding to $B_f$ is 
$$B_f^r:=\iota_+\omega_X=\omega_X\otimes_{\cO_X}B_f.$$

The $V$-filtration on $B_f$ is a decreasing, exhaustive, discrete, and left continuous filtration
$(V^\alpha B_f)_{\alpha\in\Q}$ parametrized by rational numbers.
It is characterized uniquely by a number of properties listed for instance in \cite[Section~3.1]{Saito-MHP}.
The Hodge filtration on $B_f$ induces a filtration on each $V^{\alpha}B_f$ and thus
on ${\rm Gr}_V^{\alpha}B_f=V^{\alpha}B_f/V^{>\alpha}B_f$ as well. We have a corresponding $V$-filtration on $B_f^r$ given by
$V^{\alpha}B_f^r=\omega_X\otimes_{\cO_X}V^{\alpha}B_f$. Note that since the Hodge filtrations on ${\rm Gr}_V^{\alpha}B_f$
and ${\rm Gr}_V^{\alpha}B_f^r$ are induced by those on $B_f$ and $B_f^r$, respectively, these satisfy
\begin{equation}\label{eq_compat_B}
F_{p-n-1}{\rm Gr}_V^{\alpha}B_f^r=\omega_X\otimes_{\cO_X}F_p{\rm Gr}_V^{\alpha}B_f.
\end{equation}

An important fact is a result of Saito, see  \cite[(1.3.8)]{Saito-MLCT}, describing the minimal exponent via the $V$-filtration: if $q$ is a non-negative integer and $\alpha\in (0,1]$ is a rational number, then 
\begin{equation}\label{alpha-Vfil}
\widetilde{\alpha}(Z)\geq q+\alpha \iff \partial_t^q\delta\in V^{\alpha}B_f.
\end{equation}

This setting is relevant for us since the filtered right $\cD_X$-module $\cH^1_Z(\omega_X)$, corresponding to the 
$\cD_X$-module appearing in Lemma \ref{eq_DuBois}, is isomorphic to the cokernel of the morphism 
of filtered $\cD_X$-modules
$${\rm Gr}_V^0B_f^r\overset{\cdot t}\longrightarrow {\rm Gr}_V^1B_f^r$$
between the vanishing cycles and (a Tate twist of) the nearby cycles of $f$; see \cite[Section~2.24]{Saito-MHM}. 
It follows that ${\mathbf D}\big(\cH^1_Z(\omega_X)\big)$ is isomorphic to the kernel of the dual morphism
\begin{equation}\label{eq1_nearby}
{\mathbf D}({\rm Gr}_V^1B_f^r)\to {\mathbf D}({\rm Gr}_V^0B_f^r), 
\end{equation}
where ${\mathbf D}$ is the duality functor on filtered $\cD$-modules; see \cite[Section~2.4]{Saito-MHP}.
Since $B_f^r$ underlies a pure polarizable Hodge module of weight $n$, we have an isomorphism
${\mathbf D}(B_f^r)\simeq B_f^r(n)$. Here, for a filtered $\cD$-module $(\cM,F)$, we use the notation $(\cM,F)(q)$ for the filtered 
$\cD$-module $(\cM,F[q])$, where $F[q]_i\cM=F_{i-q}\cM$. 
Using the compatibility between duality and vanishing/nearby cycles proved by Saito in \cite[Theorem~1.6]{Saito_duality}, we also have isomorphisms of filtered (right) $\cD_X$-modules
$${\mathbf D}({\rm Gr}_V^1B_f^r)\simeq {\rm Gr}_V^1B^r_f(n+1)\quad\text{and}\quad {\mathbf D}({\rm Gr}_V^0B_f^r)\simeq {\rm Gr}_V^0B^r_f(n).$$
Moreover, the morphism 
(\ref{eq1_nearby}) gets identified (see \emph{loc.}\,\emph{cit.}) with the morphism 
\begin{equation}\label{eq2_nearby}
{\rm Gr}_V^1B^r_f(n+1)\overset{\cdot(-\partial_t)}\longrightarrow {\rm Gr}_V^0B_f^r(n).
\end{equation}

After this preparation, we can prove the result stated in the Introduction.

\begin{proof}[Proof of Theorem~\ref{thm4_intro}]
It follows from the formula (\ref{eq_DuBois2}) for the graded pieces of the Du Bois complex that for every $i$, we have 
$$\cH^i(\underline{\Omega}_Z^{n-p})\simeq \cH^{i-p+1}\big({\rm Gr}_p^F{\rm DR}_X{\mathbf D}(\cH_Z^1(\omega_X))\big).$$
On the other hand, it follows from the previous discussion that ${\rm Gr}_p^F{\rm DR}_X{\mathbf D}\big(\cH_Z^1(\omega_X)\big)$
is isomorphic to the kernel of the morphism 
$${\rm Gr}_{p-n-1}^F{\rm DR}_X\big({\rm Gr}_V^1(B_f^r)\big)\to {\rm Gr}_{p-n}^F{\rm DR}_X\big({\rm Gr}_V^0(B_f^r)\big)$$
induced by right multiplication with $\partial_t$. 
If we write these complexes explicitly in terms of left $\cD$-modules, using the identification in (\ref{eq_compat_B}), 
we see that ${\rm Gr}_p^F{\rm DR}_X{\mathbf D}\big(\cH_Z^1(\omega_X)\big)$
is the kernel of the morphism of complexes
\[\label{eq_diagram}
\xymatrix{
0 \ar[r] &\Omega_X^{n-p+1}\otimes {\rm Gr}^F_1{\rm Gr}^1_VB_f\ar[d]\ar[r] &\cdots\ar[r]  & \Omega_X^{n-1}\otimes {\rm Gr}_{p-1}^F{\rm Gr}_V^1B_f\ar[r]\ar[d] & \omega_X\otimes {\rm Gr}_p^F{\rm Gr}_V^{1}B_f\ar[r]\ar[d] & 0 \\
0\ar[r] &\Omega_X^{n-p+1}\otimes {\rm Gr}^F_2{\rm Gr}^0_VB_f\ar[r] &\cdots\ar[r]  & \Omega_X^{n-1}\otimes {\rm Gr}_{p}^F{\rm Gr}_V^0B_f\ar[r] & \omega_X\otimes {\rm Gr}_{p+1}^F
{\rm Gr}_V^{0}B_f\ar[r] & 0 
}
\]
placed in cohomological degrees $-(p-1),\ldots,0$ and in which the vertical maps are given by left multiplication by $\partial_t$. Under the assumption of the theorem, we will identify the top complex and show that in the bottom complex all terms are 0.

By ($\ref{alpha-Vfil}$), the condition $\widetilde{\alpha}(Z)>p$ is equivalent to the fact that $\partial_t^p\delta\in V^{>0}B_f$, and in fact $\partial_t^j\delta\in V^{>0}B_f$ for all $j\leq p$. We thus see that $F_{p+1}B_f\subseteq V^{>0}B_f$, hence
${\rm Gr}_j^F{\rm Gr}^0_VB_f=0$ for all $j\leq p+1$. Therefore the bottom complex in the above diagram is 0 and we conclude that
$$\cH^i(\underline{\Omega}_Z^{n-p})\simeq\cH^{i-p+1}\big({\rm Gr}_{p-n-1}^F{\rm DR}_X\big({\rm Gr}_V^1(B_f^r)\big)\big).$$

Again using ($\ref{alpha-Vfil}$), since $\widetilde{\alpha}(Z)\geq p$ we deduce that $\partial_t^j\delta\in V^1B_f$ for $j\leq p-1$. We conclude that for $1\leq j\leq p$, we have
$F_jV^1 B_f=\bigoplus_{i\leq j-1}\cO_X\cdot\partial_t^i\delta$. Note also that 
$F_jV^{>1}B_f=t\cdot F_jV^{>0}B_f$; 
this is a general property of filtered $\cD$-modules underlying mixed Hodge modules, see \cite[(3.2.1.2)]{Saito-MHP}.  
This implies that for $j\leq p$, we have
$$F_jV^{>1}B_f+F_{j-1}V^1B_f=\cO_X\cdot\delta\oplus\cdots\oplus \cO_X\cdot \partial_t^{j-2}\delta\oplus (f)\cdot \partial_t^{j-1}\delta.$$
We thus conclude that the morphism
$$\cO_X/(f)\to {\rm Gr}^F_j{\rm Gr}_V^1B_f=F_jV^1B_f/(F_jV^{>1}B_f+F_{j-1}V^1B_f)$$
that maps the class of $h$ to the class of $h\partial_t^{j-1}\delta$, is an isomorphism. 

Suppose now that we have algebraic local coordinates $x_1, \ldots, x_n$ in a neighborhood of $x$. A straightforward computation then shows that
$\cH^i(\underline{\Omega}_Z^{n-p})$ is the cohomology in degree $i-p+1$ of the ``stupid" truncation $\sigma^{\geq -p+1}$ of the Koszul complex on $\cO_X/(f)$
associated to the sequence $\partial f/\partial x_1,\ldots,\partial f/\partial x_n$. This immediately gives the formula for $\cH^{p-1}(\underline{\Omega}_Z^{n-p})$
in i).

Suppose now that $p\geq 3$ and $f$ has an isolated singularity at $x$. In this case, by Generic Smoothness, around $x$ the zero-locus of $J_f$
is contained in the hypersurface defined by $f$, hence it is equal to $\{x\}$. Therefore
the elements $\partial f/\partial x_1,\ldots,\partial f/\partial x_n$ form a regular sequence
in $\cO_{X,x}$, so that
$$\cH^i(\underline{\Omega}_Z^{n-p})_x\simeq {\rm Tor}_{p-1-i}^{\cO_{X,x}}(\cO_{X,x}/J_f, \cO_{Z,x})$$
for $1\leq i\leq p-1$.
The assertions in ii) are immediate consequences. (The vanishing statement also follows from Corollary~\ref{cor:singular},
and holds for an arbitrary isolated singularity.)
\end{proof}

\section*{References}
\begin{biblist}

\bib{BH}{book}{
   author={Bruns, W.},
   author={Herzog, J.},
   title={Cohen-Macaulay rings},
   series={Cambridge Studies in Advanced Mathematics},
   volume={39},
   publisher={Cambridge University Press, Cambridge},
   date={1993},
   pages={xii+403},
}

\bib{DM}{article}{
   author={Dirks, B.},
   author={Musta\c t\u a, M.},
   title={Upper bounds for roots of $B$-functions, following Kashiwara and Lichtin},
   journal={preprint arXiv:2003.03842},
   date={2020},
}

\bib{DuBois}{article}{
   author={Du Bois, P.},
   title={Complexe de de Rham filtr\'{e} d'une vari\'{e}t\'{e} singuli\`ere},
   language={French},
   journal={Bull. Soc. Math. France},
   volume={109},
   date={1981},
   number={1},
   pages={41--81},
}

\bib{GKKP}{article}{
   author={Greb, D.},
   author={Kebekus, S.},
   author={Kov\'{a}cs, S.},
   author={Peternell, T.},
   title={Differential forms on log canonical spaces},
   journal={Publ. Math. Inst. Hautes \'{E}tudes Sci.},
   number={114},
   date={2011},
   pages={87--169},
}

\bib{GNPP}{book}{
   author={Guill\'en, F.},
   author={Navarro Aznar, V.},
   author={Pascual-Gainza, P.},
   author={Puerta, F.},
   title={Hyperr\'esolutions cubiques et descente cohomologique},
   journal={Springer Lect. Notes in Math.},
   number={1335},
   date={1988},
}

\bib{Huber}{article}{
   author={Huber, A.},
   author={J\"{o}rder, C.},
   title={Differential forms in the h-topology},
   journal={Algebr. Geom.},
   volume={1},
   date={2014},
   number={4},
   pages={449--478},
}

\bib{Saito_et_al}{article}{
author={Jung, S.-J.},
author={Kim, I.-K.},
author={Saito, M.},
author={Yoon, Y.},
title={Higher Du Bois singularities of hypersurfaces},
journal={preprint arXiv:2107.06619}, 
date={2021},
}

\bib{Kollar}{article}{
   author={Koll{\'a}r, J.},
   title={Singularities of pairs},
   conference={
      title={Algebraic geometry--Santa Cruz 1995},
   },
   book={
      series={Proc. Sympos. Pure Math.},
      volume={62},
      publisher={Amer. Math. Soc., Providence, RI},
   },
   date={1997},
   pages={221--287},
}

\bib{KS1}{article}{
   author={Kov\'acs, S.},
   author={Schwede, K.},
   title={Hodge theory meets the minimal model program: a survey of log canonical and Du Bois singularities},
   journal={Topology of stratified spaces, Math. Sci. Res. Inst. Publ., Cambridge Univ. Press},
   volume={58},
   date={2011},
   pages={51--94},
}

\bib{KS2}{article}{
   author={Kov\'acs, S.},
   author={Schwede, K.},
   title={Du Bois singularities deform},
   journal={Minimal models and extremal rays (Kyoto, 2011), Adv. Stud. in Pure Math., Math. Soc. Japan},
   volume={70},
   date={2016},
   pages={49--65},
}

\bib{KeS}{article}{
   author={Kebekus, S.},
   author={Schnell, C.},
   title={Extending holomorphic forms from the regular locus of a complex space to a resolution of singularities},
   journal={J. Amer. Math. Soc.},
   volume={34},
   date={2021},
   pages={315--368}
}

\bib{Matsumura}{book}{
   author={Matsumura, H.},
   title={Commutative ring theory},
   series={Cambridge Studies in Advanced Mathematics},
   volume={8},
   edition={2},
   note={Translated from the Japanese by M. Reid},
   publisher={Cambridge University Press, Cambridge},
   date={1989},
   pages={xiv+320},
}

\bib{MP1}{article}{
   author={Musta\c{t}\u{a}, M.},
   author={Popa, M.},
   title={Hodge ideals},
   journal={Mem. Amer. Math. Soc.},
   volume={262},
   date={2019},
   number={1268},
   pages={v+80},
}

\bib{MP2}{article}{
   author={Musta\c{t}\u{a}, M.},
   author={Popa, M.},
   title={Hodge ideals for ${\bf Q}$-divisors, $V$-filtration, and minimal
   exponent},
   journal={Forum Math. Sigma},
   volume={8},
   date={2020},
   pages={Paper No. e19, 41},
}

\bib{MP3}{article}{
   author={Musta\c{t}\u{a}, M.},
   author={Popa, M.},
   title={Hodge filtration, minimal exponent, and local vanishing},
   journal={Invent. Math.},
   volume={220},
   date={2020},
   number={2},
   pages={453--478},
}

\bib{MP4}{article}{
author={Musta\c{t}\u{a}, M.},
author={Popa, M.},
title={Hodge filtration on local cohomology, Du Bois complex, and local cohomological dimension},
journal={preprint arXiv:2108.05192}, 
date={2021},
}

\bib{PetersSteenbrink}{book}{
   author={Peters, C.},
   author={Steenbrink, J.},
   title={Mixed Hodge structures},
   series={Ergebnisse der Mathematik und ihrer Grenzgebiete. 3. Folge. A
   Series of Modern Surveys in Mathematics [Results in Mathematics and
   Related Areas. 3rd Series. A Series of Modern Surveys in Mathematics]},
   volume={52},
   publisher={Springer-Verlag, Berlin},
   date={2008},
   pages={xiv+470},
}

\bib{Saito-MHP}{article}{
   author={Saito, M.},
   title={Modules de Hodge polarisables},
   journal={Publ. Res. Inst. Math. Sci.},
   volume={24},
   date={1988},
   number={6},
   pages={849--995},
}

\bib{Saito_duality}{article}{
   author={Saito, M.},
   title={Duality for vanishing cycle functors},
   journal={Publ. Res. Inst. Math. Sci.},
   volume={25},
   date={1989},
   number={6},
   pages={889--921},
}

\bib{Saito-MHM}{article}{
   author={Saito, M.},
   title={Mixed Hodge modules},
   journal={Publ. Res. Inst. Math. Sci.},
   volume={26},
   date={1990},
   number={2},
   pages={221--333},
}

\bib{Saito93}{article}{
   author={Saito, M.},
   title={On $b$-function, spectrum and rational singularity},
   journal={Math. Ann.},
   volume={295},
   date={1993},
   number={1},
   pages={51--74},
}

\bib{Saito_microlocal}{article}{
   author={Saito, M.},
   title={On microlocal $b$-function},
   journal={Bull. Soc. Math. France},
   volume={122},
   date={1994},
   number={2},
   pages={163--184},
}

\bib{Saito09}{article}{
   author={Saito, M.},
   title={On the Hodge filtration of Hodge modules},
   language={English, with English and Russian summaries},
   journal={Mosc. Math. J.},
   volume={9},
   date={2009},
   number={1},
   pages={161--191, back matter},
}

\bib{Saito-MLCT}{article}{
      author={Saito, M.},
	title={Hodge ideals and microlocal $V$-filtration},
	journal={preprint arXiv:1612.08667}, 
	date={2016}, 
}

\bib{Schwede}{article}{
   author={Schwede, K.},
   title={A simple characterization of du Bois singularities},
   journal={Compositio Math.},
   volume={143},
   date={2007},
   number={4},
   pages={813-828},
}

\bib{Steenbrink}{article}{
   author={Steenbrink, J.},
   title={Vanishing theorems on singular spaces},
   note={Differential systems and singularities (Luminy, 1983)},
   journal={Ast\'{e}risque},
   number={130},
   date={1985},
   pages={330--341},
}

\end{biblist}

\end{document}